\theoremstyle{plain}
\newtheorem{theorem}{Theorem}[section]
\newtheorem{corollary}[theorem]{Corollary}
\newtheorem{proposition}[theorem]{Proposition}
\newtheorem{Lemma}[theorem]{Lemma}
\theoremstyle{definition}
\numberwithin{equation}{section}
\begin{document}

\title{Spectral analysis of Neumann-Poincar\'e operator}

\author[Ando Kang Miyanishi Putinar]{Kazunori Ando, Hyeonbae Kang, Yoshihisa Miyanishi and Mihai Putinar}

\address{Department of Electrical and Electronic Engineering and Computer Science, Ehime University, Ehime 790-8577, Japan, {\tt ando@cs.ehime-u.ac.jp}}

\address{Department of Mathematics, Inha University, Incheon
402-751, S. Korea, {\tt hbkang@inha.ac.kr}}

\address{University of California at Santa Barbara, CA 93106, USA and
Newcastle University, Newcastle upon Tyne NE1 7RU, UK, {\tt mputinar@math.ucsb.edu, mihai.putinar@ncl.ac.uk}}

\address{Center for Mathematical Modeling and Data Science, Osaka University, Japan,
{\tt miyanishi@sigmath.es.osaka-u.ac.jp}}

 \maketitle

\begin{abstract}
This is a survey of accumulated spectral analysis observations spanning more than a century, referring to the double layer potential
integral equation, also known as Neumann-Poincar\'e operator. The very notion of spectral analysis has evolved along this path. Indeed, the quest for solving this specific singular integral equation, originally aimed at elucidating classical potential theory problems, has inspired and shaped the development of theoretical spectral analysis of linear transforms in XX-th century. We briefly touch some marking discoveries into the subject, with ample bibliographical references to both old, sometimes forgotten, texts and new contributions. It is remarkable that applications of the spectral analysis of the Neumann-Poincar\'e operator are still uncovered nowadays, with spectacular impacts on applied science. A few modern ramifications along these lines are depicted in our survey.

\end{abstract}
 \tableofcontents

\section{Introduction} Layer potentials entered early into the main stream of mathematics as field theoretical entities. Their role in forging the main
problems of Mathematical Physics of XIX-th Century is well known. The dawn of modern spectral analysis is also rooted in the attempts to exploit layer potentials as
integral transforms with a singular kernel. Less known for the general reader is the notable impact of layer potentials on current investigations in applied fields of high technological impact, such as photonics and phononics, composite materials, medical imaging.

The present  survey offers a few snapshots on the spectral analysis of the double layer potential, also known as the Neumann-Poincar\'e (in short NP) operator. We record, with ample bibliographical references, Poincar\'e's 1897 breakthrough
vision, together with some of its aftershocks, to jump to some spectacular advances made during the last decade. More specifically, we explain how the balance of inner-outer harmonic field energies led Poincar\'e to an early discovery of a discrete spectrum of real critical values. We place this pictures into the modern theory of spectral analysis of compact, symmetrizable Hilbert space operators and furthermore, we link it to quasi-conformal theory and specific integral transforms arising in the function theory of a complex variable. Then
we comment on Carleman's doctoral dissertation which contains the first explicit spectral resolution of a non-compact NP operator, associated to a planar domain with one corner. We mention a couple of recent contributions clarifying at the level of today's mathematical rigor Carleman's foresight.  Finally, we explain in some detail recent discoveries referring to the asymptotics of eigenvalues distribution of the NP operator on 2D or 3D domains. A separate section illustrates recent spectral computations arising in the double layer potential associated to an elasticity problem. The contents above gives a glimpse on the topics touched in our article.

Our survey is far from being exhaustive. We have selected only a few relevant aspects of a highly multifaceted subject, with the aim at connecting some undergoing research to classical contributions. Any omission is non-intentional. Parts of this survey have appeared in one or another of our publications. We reproduce relevant paragraphs of our prior works without quotations.

\bigskip

{\bf Acknowledgement.} The authors thank for hospitality the Simion Stoilow Mathematical Institute of the Romanian Academy for hosting them in the Summer of 2019 and offering them the opportunity to progress on their joint research and craft the present survey.

\section{Prerequisites of potential theory}

We recall in this section some terminology and basic facts of Newtonian potential theory. Details can be found in \cite{AK-book-07,KPS-ARMA-07}.
We warn the reader that there is no consensus in the vast literature on the subject of signs and constants in the definitions of potentials. We hope this will not be a cause of confusion.

\subsection{Potentials and jump formulae}

Let $d \geq 2$ be an integer and $\Omega$ be a bounded domain in $\mathbf R^d$ with boundary $\Gamma$.
For the time being we assume that $\Gamma$ is at least $C^2$-smooth, or as Radon emphasizes in his geometric approach \cite{Radon2}, the curvature of $\Gamma$ is continuous. The $(d-1)$-dimensional surface measure on $\Gamma$
is $d\sigma$ and the unit outer normal to a point $y \in \Gamma$
will be denoted $n_y$.

We let $E(x,y)=E(x-y)$ stand for the normalized
Newtonian kernel:
\begin{equation}
E(x,y) = \left\{ \begin{array}{cc}
           \frac{1}{2\pi} \log \frac{1}{|x-y|}, & d=2,\\
           c_d |x-y|^{2-d}, & d \geq 3,
         \end{array}\right.
\end{equation}
where $c_d^{-1}$ is the surface area of the unit sphere in
$\mathbf R^d$. The signs were chosen so that $\Delta E = -\delta$
(Dirac's delta-function).

We associate to a $C^2$-smooth function (density, in physical terms) $f(x)$ on $\Gamma$ the fundamental potentials:
the {\it single and double layer potentials} in $\mathbf R^d$; denoted $S_f$ and $D_f$ respectively:
\begin{equation}
S_f(x) = \int_\Gamma E(x,y) f(y) d\sigma(y),\ \ \
D_f(x) = \int_\Gamma \frac{\partial}{\partial n_y} E(x,y) f(y) d \sigma(y).
\end{equation}
The surface $\Gamma$ divides $\mathbf R^d$ into two domains $\Omega = \Omega_i$
(interior to $\Gamma$) and the exterior $\Omega_e$. Thus the potentials above
define pairs of functions $(S_f^i, S_f^e)$ and $(D_f^i,D_f^e)$ which are harmonic
in $\Omega_i$ and $\Omega_e$ respectively.

As is well known from classical potential theory (cf. \cite{Kellog}, \cite{Sobolev})
denoting by $S_f^i(x), \frac{\partial}{\partial n_x} S_f^i (x)$ (and corresponding symbols with superscript e) the
limits at $x \in \Gamma$
from the interior (or exterior), the following relations (known as the jump formulas for the
potentials) hold for all $x \in \Gamma$:
\begin{equation}\begin{array}{l}
S_f^i(x) = S_f^e(x);\\
\\
\frac{\partial}{\partial n_x} S_f^i(x) = \frac{1}{2} f(x) + \int_\Gamma  \frac{\partial}{\partial n_x}
E(x,y) f(y) d\sigma(y);\\
\\
 \frac{\partial}{\partial n_x} D_f^i(x) =  \frac{\partial}{\partial n_x} D_f^e(x);\\
 \\
 D_f^i(x) = -\frac{1}{2} f(x) + \int_\Gamma  \frac{\partial}{\partial n_y} E(x,y) f(y) d\sigma(y);\\
 \\
 \frac{\partial}{\partial n_x} S_f^e(x) =  -\frac{1}{2} f(x) + \int_\Gamma  \frac{\partial}{\partial n_x}
E(x,y) f(y) d \sigma(y);\\
\\
D_f^e(x) = \frac{1}{2} f(x) + \int_\Gamma  \frac{\partial}{\partial n_y} E(x,y) f(y) d\sigma(y).
\end{array}
\end{equation}

Rather direct computations (see for instance
Chapter II in \cite{Kellog}, Chapters 18-19 in \cite{Sobolev} or \cite{Mazya}) show that
the integral kernels
$$ K(x,y):=  -\frac{\partial}{\partial n_y} E(x-y); \ \  K^\ast(x,y) =  -\frac{\partial}{\partial n_x} E(x-y)$$
satisfy on $\Gamma$ the following estimates, for $d \geq 3$:
\begin{equation}\begin{array}{l}
|K(x,y)| = O(\frac{1}{|x-y|^{d-2}}), \ \ x,y \in \Gamma, x \neq y,\\
|K^\ast(x,y)| = O(\frac{1}{|x-y|^{d-2}}), \ \ x,y \in \Gamma, x
\neq y.
\end{array}
\end{equation}

For $d=2$, due to the fact that $\log|z-w|$ is the real part of
a complex analytic function $\log(z-w) = \log|z-w| + i \arg(z-w), \ z,w \in \Gamma$, and by Cauchy-Riemann's
equations one obtains
$$ K(z,w) = \frac{\partial}{\partial \tau_w} \arg(z-w),$$
where $\tau_w$ is the unit tangent vector to the curve $\Gamma$. Thus, on any
smooth curve $\Gamma \subset \mathbf R^2$, the kernels $K(z,w)$ and $K^\ast(z,w)$ are  uniformly bounded.

Returning to the general $d$-dimensional case, we define on $L^2(\Gamma) = L^2(\Gamma, d\sigma)$ the
{\it Neumann-Poincar\'e  operator} $K$:
\begin{equation}
(Kf)(x) = 2 \int_\Gamma K(x,y) f(y) d\sigma(y), \ \ f \in L^2(\Gamma, d\sigma).
\end{equation}
The $L^2$ adjoint $K^\ast$ will be an integral operator with
kernel $K^\ast(x,y)$.  The nature of the diagonal singularity of
the kernel $K(x,y)$ shows  that $K$ is a compact operator in the
Schatten-von Neumann class $\mathcal{C}^p(L^2(\Gamma)), p > d-1,$
see \cite{Kellog}. Since the kernel $K$ is bounded when $d=2$, it
is Hilbert-Schmidt on any smooth planar curve. We will show in the
next section that $K^\ast$ is symmetrizable, that is $K^\ast$
becomes self-adjoint with respect to a different (incomplete)
inner product on $L^2 (\Gamma)$.

Similarly, the linear operator $$ Sf = S_f|_\Gamma, \ \ f \in
L^2(\Gamma),$$ turns out to be bounded (from $L^2(\Gamma)$ to the
same space). Remark that the representing kernel $E(x,y)$ of $S$
is pointwise  non-negative for $d\geq 3$. With these conventions
the jump formulas become, as functions on $\Gamma$:
\begin{equation}\begin{array}{l}
  {S_f}^i = S_f^e = Sf;\\
  \\
 \partial_n S_f^i = \frac{1}{2} f -\frac{1}{2} K^\ast f;\\
 \\
\partial_n S_f^e = -\frac{1}{2} f -\frac{1}{2} K^\ast f;\\
\\
D_f^i = -\frac{1}{2} f -\frac{1}{2}Kf;\\
\\
D_f^e = \frac{1}{2} f -\frac{1}{2}Kf.\\
\end{array}
\end{equation}
Above, and always in this paper $n$ designates the {\it outer} normal
to $\Omega$.
\bigskip

Thus, to solve Dirichlet's problem $\Delta u =0$ in $\Omega_i$, with continuous boundary data
$u|_\Gamma = \phi$ it is sufficient to solve the integral equation
$$ f + K f = -2\phi. $$
Then $u = D_f$ would be the harmonic solution. Carl Neumann \cite{Neumann} remarked that
the infinite sum, known today as Neumann series,
$$ -2\phi + 2K \phi - 2 K^2 \phi + \ldots, $$
if convergent, provides the density $f$. Great deal of efforts were concentrated during the last two decades of XIX-th Century
at enlarging the classes
of shapes $\Gamma$ for which Neumann series converges.

Since the operator $K$ is compact, the invertibility of $I+K$ led Fredholm \cite{Fredholm} to his infinite determinant
$\det (I + \lambda K)$ study which is one of the uncontested foundations of spectral analysis of integral equations. Soon afterwards
Hilbert \cite{Hilbert}, F. Riesz \cite{Riesz,RN} and many of their contemporaries laid the theoretical framework streaming from these computations.
In the introduction of his influential monograph Plemelj \cite{Plemelj} states in 1910 that ``{\it one of the most important tasks of modern Potential Theory
is to explore the invertibility of boundary singular integral equations arising from single and double layer potentials}".

The reader interested in tracking such fascinating historical aspects of Potential Theory can consult the German Encyclopaedia articles \cite{Lichtenstein,BM,HT}
and the circulating monographs of the period such as Korn's \cite{Korn2}.

Nowadays we have well developed and fully understood definitions, jump formulae, boundedness and the like, of single and double layer potentials on Lipschitz boundaries.
The technicalities related to such generalizations on "rough" boundaries are by no means trivial, see \cite{CMM82,DKV-Duke-88,Mazya}.

\subsection{Poincar\'e's variational principle}

C. Neumann's series method was successfully applied towards the solvability of Dirichlet's problem on convex domains.
A more general method to solve Dirichlet's problem was discovered by Poincar\'e and called by him ``balayage''. However,
the convergence of Neumann series and in general the invertibility of the double layer potential operator on the boundary
were computationally more accessible techniques. In a long and technical memoir of 1897, Poincar\'e attacked the convergence of the Neumann series
on non-convex boundaries \cite{Poi1}. At the end of this memoir he remarks yet another approach to solve Dirichlet's problem, by considering the critical values of a
balance of energies (inner, versus outer) of the field produced by a boundary charge. Poincar\'e concludes his remarks by the following words:
{\it "After having established these results
[concerning convergence of the Neumann series] rigorously, I felt
obliged in the two final chapters to give an idea of the insights
which initially led me to foresee these results. I thought that,
despite their lack of rigor, these could be useful as tools for
research insofar as I had already used them successfully once."}

The new insight was immediately noticed. For instance in 1900
Steklov praises the central role of Poincar\'e's new extremal functions \cite{Stekloff1,Stekloff2} stating
{\it ``the solvability of all fundamental problems of Mathematical Physics is reduced to the proof of the
fundamental theorem [of Poincar\'e]''} \cite{Stekloff1} pp. 224. And he gives ample evidence to this bold assertion,
analyzing indeed current fundamental boundary value problems based on strong convergence of orthogonal series decompositions or, in today's terminology, min-max characterization of characteristic values of quadratic forms depending on infinitely many variables.
See also \cite{Stekloff3} for a closely related topics, now bearing the name of Steklov's problem \cite{GP}.

Although Poincar\'e's program and in particular his ``fundamental functions" entered into the main circuit of Potential Theory, cf.
\cite{Lichtenstein}, only sporadic and indirect reverberations of it were recorded for the last century. Poincar\'e's variational principle was formulated in modern terms in a recent article \cite{KPS-ARMA-07} which serves as the root and guide of the present section.

The setting is as follows, still fixing a smooth bounded domain $\Omega$ in Euclidean space.
Let $ H$ be the space of harmonic functions on $\Omega_i \cup \Omega_e$, vanishing at
infinity and with finite energy seminorm:
\begin{equation}
 \| h \|_{ H}^2 =  \int_{\Omega_i \cup \Omega_e} |\nabla h|^2 dx.
\end{equation}
Note that only  locally constant functions are annihilated by this seminorm.
It will be necessary to distinguish between the two restrictions
of $h$ to the inner and outer domain; we denote $h= (h_i, h_e)$
where $h_i = h|_{\Omega_i}$ and similarly $h_e = h|_{\Omega_e}$.
In virtue of Poincar\'e's inequality the functions $h_i$ and $h_e$
are in the Sobolev $W^{1,2}$-spaces of the corresponding domains.
To simplify notation we put henceforth $W^s = W^{s,2}$.

We can regard an element $h \in   {H} \subset \mathcal{\mathcal D}'(\mathbf R^d)$ as a distribution defined on the whole
space. Then $\Delta h = \mu \in \mathcal{\mathcal D}'_\Gamma(\mathbf R^d)$ (the lower index means ${\rm supp}( \mu) \subset \Gamma$)
and, by a slight abuse of notation
\begin{equation}
-h(x) = S_\mu(x) = \int_\Gamma E(x,y) d\mu(y), \ \ x \in \mathbf{R}^d \setminus \Gamma.
\end{equation}
If the distribution $\mu$ is given by a smooth function times the
surface measure of $\Gamma$, then $h = S_\mu$ and, as a single layer potential,
$h_i|_\Gamma = h_e|_\Gamma$. Our next aim is to identify the
closed subspace of ${H}$ characterized by the latter
matching  property.

By assumption the surface $\Gamma$ is smooth. Hence there are linear
continuous (restriction to the boundary) trace operators
$$ {\rm Tr} : W^{1}(\Omega_{i,e}) \longrightarrow W^{1/2}(\Gamma).$$
Moreover, the trace operator from each side of $\Gamma$ is
surjective (and hence it has a continuous right inverse), see for
instance \cite{LM}. We will denote in short
$$ h|_\Gamma = {\rm Tr} \ h.$$
If $d \geq 3$, then for any function $f \in W^{1/2}(\Gamma)$ there
exist solutions $(h_i, h_e) \in {H}$ to the inner and
outer Dirichlet problems with boundary data $f$: $h_i|_\Gamma =
h_e|_\Gamma = f$, see \cite{Landkof}.

In the case
$d=2$ the additional assumption $\int_\Gamma f d\sigma = 0$ must be made, to assure the existence
of $h_e$ with $h_e(\infty) =0$ and finite energy, see \cite{Landkof} .

A series of remarkable identifications, simple consequences of Stokes theorem, are in order.
We refer for proofs to Landkof's monograph \cite{Landkof} or the cited article \cite{KPS-ARMA-07}.

For every $f \in L^2(\Gamma)$,
\begin{equation}
\langle Sf,f \rangle = \| S_f \|^2_{ H}.
\end{equation}

 Hence $S$ is a non-negative, linear and bounded self-adjoint operator on $L^2(\Gamma)$.
From this observation, the identification of single layer potentials in the energy space $H$ follows:

\begin{proposition} Assume $d \geq 3$ and let $h = (h_i,h_e) \in {H}$. Then
$ h_i|_\Gamma  =  h_e|_\Gamma$ if and only if there exists $\rho \in W^{-1/2}(\Gamma)$
such that $h = S_\rho$.
\end{proposition}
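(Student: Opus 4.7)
My plan is to argue both directions separately.

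Sufficiency: if $h = S_\rho$ with $\rho \in W^{-1/2}(\Gamma)$, the continuity of the single layer across $\Gamma$ (explicit in the first jump formula for smooth $f$, and extended to the $W^{-1/2}$ scale by density and continuity of $S$) immediately yields $h_i|_\Gamma = h_e|_\Gamma$.

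Necessity: assume $h_i|_\Gamma = h_e|_\Gamma$. Because these common traces lie in $W^{1/2}(\Gamma)$, gluing $h_i$ and $h_e$ across $\Gamma$ produces a function $h \in W^{1,2}_{\mathrm{loc}}(\mathbf R^d)$. I view this glued $h$ as a distribution on $\mathbf R^d$ and define
$$\rho := -\Delta h \in \mathcal D'(\mathbf R^d).$$
Since $h$ is harmonic separately on $\Omega_i$ and on $\Omega_e$, the distribution $\rho$ is supported on $\Gamma$.

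The core step is to upgrade this support information to $\rho \in W^{-1/2}(\Gamma)$. Given $\varphi \in W^{1/2}(\Gamma)$, the surjectivity of the trace (with a bounded right inverse) yields a compactly supported extension $\Phi \in W^{1,2}(\mathbf R^d)$ with $\mathrm{Tr}\,\Phi = \varphi$ and $\|\Phi\|_{W^{1,2}} \le C\|\varphi\|_{W^{1/2}(\Gamma)}$. By the definition of the distributional Laplacian,
$$-\langle \Delta h, \Phi \rangle \;=\; \int_{\mathbf R^d} \nabla h \cdot \nabla \Phi \, dx,$$
and the right-hand side depends only on the trace $\varphi$: any other extension $\Phi'$ of $\varphi$ differs from $\Phi$ by an element of $W^{1,2}_0(\Omega_i) \oplus W^{1,2}_0(\Omega_e)$, on which the pairing vanishes by Green's identity and harmonicity of $h$ on each component (no boundary term from infinity, thanks to the compact support of $\Phi$). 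I may therefore set $\langle \rho, \varphi \rangle := -\langle \Delta h, \Phi \rangle$, and Cauchy--Schwarz gives
$$|\langle \rho, \varphi \rangle| \;\le\; \|\nabla h\|_{L^2}\,\|\nabla \Phi\|_{L^2} \;\le\; C \|h\|_H\,\|\varphi\|_{W^{1/2}(\Gamma)},$$
so $\rho \in W^{-1/2}(\Gamma)$.

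To conclude, the single layer $S_\rho$ is well defined with this $W^{-1/2}$ density. Since $\Delta E = -\delta$, one has $\Delta S_\rho = -\rho = \Delta h$, so $h - S_\rho$ is harmonic on all of $\mathbf R^d$ and vanishes at infinity (the hypothesis $d \ge 3$ is used here to ensure both terms decay); Liouville then forces $h = S_\rho$. The main obstacle in the argument is precisely the regularity upgrade from ``$\rho$ supported on $\Gamma$'' to ``$\rho \in W^{-1/2}(\Gamma)$'', which is settled by the displayed duality pairing against the trace space.
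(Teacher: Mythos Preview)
Your proof is correct and matches the paper's approach: the paper does not prove the proposition in detail (it defers to Landkof's monograph and to Khavinson--Putinar--Shapiro), but the paragraph immediately preceding the statement sketches precisely your route of viewing the glued $h$ as a distribution on $\mathbf R^d$, setting the density equal to $-\Delta h$ supported on $\Gamma$, and recovering $h$ as the corresponding single layer potential. You have correctly supplied the one step the paper leaves implicit, namely the duality estimate against the trace space that upgrades ``supported on $\Gamma$'' to membership in $W^{-1/2}(\Gamma)$.
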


The case $d=2$ requires again the additional assumption that $\rho(\mathbf 1)=0$.
Otherwise $S_\rho$ would not have a square summable gradient on the exterior domain.

Henceforth we define the {\it subspace of single layer potentials} by
$$ {\mathcal S} = \{ h \in {H}, \ h_i|_\Gamma = h_e|_\Gamma \}.$$
The orthogonal complement in ${H}$ will be denoted
${\mathcal D} = {\mathcal S}^\perp$ and we will identify this with the {\it space of
double layer potentials} belonging to ${H}$. Indeed, some standard Sobolev space considerations lead to
the following statement.

\begin{proposition} Let $h = (h_i,h_e) \in {H}$. The following conditions are equivalent:

a) $h \in {\mathcal D} (= {\mathcal S}^\perp);$

b) $\frac{\partial h_i}{\partial n} = \frac{\partial h_e}{\partial
n}$ (in $W^{-1/2}(\Gamma));$

c) There exists $f \in W^{1/2}(\Gamma)$ such that $ h = D_f$.

In this case $f = h_e-h_i$.
\end{proposition}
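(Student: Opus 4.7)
The plan is to establish (a)$\Leftrightarrow$(b) by Green's first identity paired against single layer potentials, and then to pass between (b) and (c) using the jump formulas listed in (2.6).

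For (a)$\Leftrightarrow$(b), I would fix $\rho\in W^{-1/2}(\Gamma)$ and compute $\langle h, S_\rho\rangle_H$. Applying Green's first identity on $\Omega_i$ and $\Omega_e$ to the pair of harmonic functions $h$ and $S_\rho$, and keeping track of the fact that the outer normal to $\Omega_e$ along $\Gamma$ is $-n$, one obtains
\begin{equation*}
\langle h, S_\rho\rangle_H = \int_\Gamma (S_\rho)_i\,\partial_n h_i\,d\sigma - \int_\Gamma (S_\rho)_e\,\partial_n h_e\,d\sigma.
\end{equation*}
Since $S_\rho\in\mathcal S$, its interior and exterior traces agree with $S\rho$, and hence
\begin{equation*}
\langle h, S_\rho\rangle_H = \langle S\rho,\ \partial_n h_i-\partial_n h_e\rangle_{W^{1/2},W^{-1/2}}.
\end{equation*}
For $d\ge 3$ the interior-exterior harmonic extension result recalled just before Proposition 2.2 shows that $\rho\mapsto S\rho$ maps $W^{-1/2}(\Gamma)$ onto $W^{1/2}(\Gamma)$ (with the mean-zero constraint in the case $d=2$). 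Therefore $h\in\mathcal S^\perp$ if and only if $\partial_n h_i=\partial_n h_e$ as elements of $W^{-1/2}(\Gamma)$, giving the equivalence of (a) and (b). The subtle point here is the rigorous interpretation of the boundary integrals as the $W^{1/2}$-$W^{-1/2}$ duality pairing, since the normal derivatives of harmonic $W^1$ functions exist only in this weak sense.

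For (c)$\Rightarrow$(b), this is immediate from the jump formula in (2.6), namely $\partial_n D_f^i=\partial_n D_f^e$, together with the identity $D_f^e-D_f^i=f$ which accounts for the closing statement $f=h_e-h_i$.

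For (b)$\Rightarrow$(c), set $f:=h_e|_\Gamma-h_i|_\Gamma\in W^{1/2}(\Gamma)$ and consider $g:=h-D_f$. By construction the jump of the trace across $\Gamma$ is $(h_e-h_i)-(D_f^e-D_f^i)=f-f=0$, and hypothesis (b) together with the equality of the two normal derivatives of $D_f$ shows that $g$ has no jump in its normal derivative either. Interpreting $g$ as a distribution on $\mathbf R^d$, the Laplacian $\Delta g$ is supported on $\Gamma$ and equals a combination of the trace-jump and normal-derivative-jump contributions, both of which vanish; hence $\Delta g=0$ in $\mathcal D'(\mathbf R^d)$. Thus $g$ is an entire harmonic function on $\mathbf R^d$ with $\|g\|_H<\infty$; for $d\ge 3$ the Dirichlet integral being finite forces $g$ to be constant and then the decay at infinity forces $g\equiv 0$, so $h=D_f$. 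In the $d=2$ case one uses the additional mean-zero normalization discussed after Proposition 2.1 to conclude similarly. The main obstacle I anticipate is precisely this last removable-singularity step: justifying that a harmonic function in $\Omega_i\cup\Omega_e$ with matching traces and matching (weakly defined) normal derivatives extends harmonically across $\Gamma$, which is ultimately a distributional reformulation of the classical jump identities $\Delta S_\mu=-\mu$ and $\Delta D_\nu=-\partial_n\nu$ on $\Gamma$.
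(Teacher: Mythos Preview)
The paper does not actually supply a proof of this proposition; it merely states that ``some standard Sobolev space considerations lead to the following statement'' and defers details to \cite{KPS-ARMA-07}. Your argument is correct and is precisely the kind of standard computation the authors have in mind: the Green's identity calculation giving $\langle h,S_\rho\rangle_H=\langle S\rho,\partial_n h_i-\partial_n h_e\rangle$ together with the surjectivity of $S:W^{-1/2}(\Gamma)\to W^{1/2}(\Gamma)$ yields (a)$\Leftrightarrow$(b), and the subtraction $g=h-D_f$ followed by the distributional Laplacian computation and Liouville is the canonical route to (b)$\Rightarrow$(c).

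Two minor remarks. First, the ``obstacle'' you flag in the last paragraph is not really one: once $g_i|_\Gamma=g_e|_\Gamma$ in $W^{1/2}(\Gamma)$ you have $g\in W^{1,2}_{\mathrm{loc}}(\mathbb{R}^d)$ by the standard gluing lemma for Sobolev spaces, and then two integrations by parts against a test function $\phi$ give $\langle\Delta g,\phi\rangle=\int_\Gamma(\partial_n g_e-\partial_n g_i)\phi+\int_\Gamma(g_i-g_e)\partial_n\phi=0$, so Weyl's lemma applies directly. Second, for the Liouville step you do not need separate treatments in $d\ge 3$ and $d=2$: an entire harmonic function with $\nabla g\in L^2(\mathbb{R}^d)$ is necessarily constant (each partial derivative is harmonic and $L^2$, hence zero), and the decay at infinity built into $H$ forces the constant to be zero.
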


The only elements of ${H}$ annihilated by the energy
seminorm are scalar multiples of $(\mathbf{1}, 0)$. This is the
double layer potential of the constant function, and is,
therefore, orthogonal to ${\mathcal S}$. By the boundary formula $
(D_f)_e = \frac{1}{2} f - \frac{1}{2} K$ we infer $K{\mathbf 1} =
\mathbf{1}.$

Another distinguished element of ${H}$ is provided by the
so called equilibrium distribution $\rho$ on $\Gamma$; namely $(1,h) \in
{\mathcal S}$, that is $S \rho = 1$ and $h = S^e_\rho$.

A central role in Poincar\'e's program is played by the following result, known as {\it Plemelj's symmetrization principle}, see \cite{Plemelj, Korn1, Korn3} .

\begin{Lemma} The operators $S,K : L^2(\Gamma) \longrightarrow L^2(\Gamma)$ satisfy the identity
\begin{equation}
KS = SK^\ast .
\end{equation}
\end{Lemma}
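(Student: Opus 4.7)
The plan is to reduce the identity $KS = SK^\ast$ to the self-adjointness of $KS$, and then derive the latter from Green's identity applied to two single layer potentials. Since the Newtonian kernel is symmetric, $E(x,y) = E(y,x)$, the operator $S$ is manifestly self-adjoint on $L^2(\Gamma)$. Taking $L^2$-adjoints gives $(KS)^\ast = S^\ast K^\ast = S K^\ast$, so the identity $KS = SK^\ast$ is equivalent to the assertion that $KS$ is self-adjoint, that is,
\begin{equation*}
\langle KSf, g\rangle = \langle f, KSg\rangle \qquad (f,g \in L^2(\Gamma)).
\end{equation*}

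To establish this, I would test it against arbitrary $f,g$ in a suitable dense subclass (say $C^\infty$ densities, so that the jump formulas apply classically), then extend by continuity. Form the single layer potentials $u = S_f$ and $v = S_g$; both are harmonic in $\Omega_i$ and continuous across $\Gamma$, with interior traces $u_i|_\Gamma = Sf$ and $v_i|_\Gamma = Sg$, and with interior normal derivatives determined by the jump formulas
\begin{equation*}
\partial_n u_i = \tfrac{1}{2} f - \tfrac{1}{2} K^\ast f, \qquad \partial_n v_i = \tfrac{1}{2} g - \tfrac{1}{2} K^\ast g.
\end{equation*}
Green's second identity on the bounded smooth domain $\Omega_i$ gives
\begin{equation*}
\int_\Gamma \bigl( u_i\, \partial_n v_i - v_i\, \partial_n u_i \bigr)\, d\sigma = 0,
\end{equation*}
since $\Delta u = \Delta v = 0$ in $\Omega_i$. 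Substituting the traces above, the terms $\frac{1}{2}\langle Sf,g\rangle$ and $-\frac{1}{2}\langle Sg,f\rangle$ cancel because $S$ is self-adjoint, leaving
\begin{equation*}
\langle Sf, K^\ast g\rangle = \langle Sg, K^\ast f\rangle,
\end{equation*}
which is exactly $\langle KSf, g\rangle = \langle f, KSg\rangle$, and hence $KS = SK^\ast$.

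The one technical point worth watching is the regularity assumption needed to invoke the jump formulas pointwise on $\Gamma$; if one only assumes $f, g \in L^2(\Gamma)$, the jump relations hold in the $W^{-1/2}$ sense and Green's identity must be interpreted as a duality pairing between $W^{1/2}(\Gamma)$ and $W^{-1/2}(\Gamma)$. The smoothness of $\Gamma$ assumed in this section makes this routine. In dimension $d=2$ one would normally worry about the behavior of $S_f$ at infinity, but since the entire argument takes place on the bounded domain $\Omega_i$, no assumption on $\int_\Gamma f\, d\sigma$ is needed; Green's identity on the interior is enough. The mildly non-trivial part of the argument is simply the bookkeeping of signs in the jump formulas, but once this is done the algebraic cancellation above is immediate.
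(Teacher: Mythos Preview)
Your argument is correct. The reduction to self-adjointness of $KS$ via $S=S^\ast$ is clean, and the use of Green's second identity on $\Omega_i$ applied to the two single layer potentials $S_f$ and $S_g$ carries through exactly as you write: the symmetric terms $\langle Sf,g\rangle$ and $\langle Sg,f\rangle$ cancel, leaving $\langle KSf,g\rangle = \langle KSg,f\rangle$. Your remark about working first with smooth densities and extending by continuity is the right way to handle the regularity issue, and your observation that only the interior domain is used (so no decay at infinity is needed) is also correct.

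For comparison: the paper does not actually supply a proof of this lemma. It is stated as Plemelj's symmetrization principle with references to Plemelj and Korn, consistent with the survey nature of the article. The classical proofs in those sources proceed either along your lines or via Green's representation formula applied to $S_f$ in $\Omega_i$ (writing $S_f = S_{\partial_n S_f^i} + D_{Sf}$ up to signs, inserting the jump relation for $\partial_n S_f^i$, and reading off $KS = SK^\ast$ from the interior trace). Your approach via Green's second identity is essentially equivalent and arguably more symmetric in $f$ and $g$; either route is standard.
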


In other terms
$$ \langle S K^\ast f, g \rangle_{2,\Gamma} = \langle S f, K^\ast g \rangle_{2,\Gamma}, \ \ f,g \in L^2(\Gamma).$$
Since we can identify  isometrically $\sqrt{\mathcal S} L^2(\Gamma) = W^{1/2}(\Gamma)$ and consequently $S W^{-1/2} (\Gamma) = W^{1/2}(\Gamma)$,
Plemelj's symmetrization formula states that the adjoint of Neumann-Poincar\'e (NP) operator $K^\ast$ is bounded and self-adjoint in $W^{-1/2}.$
Hence it has real spectrum! Much more can be said about its spectrum following Poincar\'es steps.

 The prehilbertian space ${H}$ possesses two natural direct sum decompositions:
$$ {H} = {\mathcal S} \oplus {\mathcal D} = {H}_i \oplus {H}_e.$$
By definition, the latter subspaces are
$$ {H}_i = \{ (h_i,0) \in {H}\}, \ \ \ {H}_e = \{ (0,h_e) \in {H}\}.$$
Let $P_s,P_d,P_i,P_e$ be the corresponding orthogonal projections.
The only subspace ${N} = \mathbf{C} (\mathbf{1}, 0)$
annihilated by the seminorm satisfies:
$$ {N} \subset {\mathcal D} \cap {H}_i.$$
Sometimes we will prefer to work within a Hilbert space, and then we will replace tacitly
${H}$ by ${H} \ominus {N}$.

Recall that the boundary single layer potential $S$ is an $L^2$-positive operator mapping
$W^{-1/2}(\Gamma)$ onto $W^{1/2}(\Gamma)$. The $L^2$ pairing between the two Sobolev spaces is
still denoted $\langle S\rho, f\rangle_{2,\Gamma}, \ \rho \in W^{-1/2}(\Gamma), f \in W^{1/2}(\Gamma)$.

For a boundary charge $f \in W^{-1/2}(\Gamma)$ define the inner, respectively, outer energy functionals:
$$ J_i[f] = \int_{\Omega_i} |\nabla S_f|^2 dx,$$
$$ J_e[f] = \int_{\Omega_e} |\nabla S_f|^2 dx. $$

Poincar\'e proposes to analyze the characteristic values of the (Rayleigh) quotient of quadratic forms
$$ \frac {J_e[f]-J_i[f]}{J_e[f]+J_i[f]}$$
and predicts that they fill a discrete spectrum. Hence, we guess in modern terms that some relative compactness
is responsible for this phenomenon. Indeed,
 \begin{equation}
 \frac {J_e[f]-J_i[f]}{J_e[f]+J_i[f]} = \frac{\langle (P_e-P_i)S_g, S_g \rangle_{H}}{\| S_g \|^2_{H}} =
\frac{\langle KS g, g \rangle_{2,\Gamma}}{\langle Sg,g \rangle^2_{2,\Gamma}}.
 \end{equation}

In other terms, Poincar\'e's quotient of quadratic forms is precisely the ``angle operator'' between the two orthogonal decompositions of the energy space.
Plemelj's symmetrization principle and a non-trivial observation of general nature, to be recalled in a subsequent section, lead to the following
theorem, whose main points were foreseen by
Poincar\'e \cite{Poi1}.

\begin{theorem} Let $\Omega \subset \mathbf R^d$ be a bounded domain with smooth boundary $\Gamma$
and let $\Omega_e = \mathbf R^d \setminus \overline{\Omega}.$
Let $S_\rho$ denote the single layer potential of a distribution $\rho \in W^{-1/2}(\Gamma),
\ \ (\rho(\mathbf 1)=0$ in case $d=2$).

Define successively, as long as the maximum is positive, the energy quotients
\begin{equation}
\lambda_k^+ = \max_{ \rho \perp \{ \rho_0^+, ..., \rho_{k-1}^+\}}
\frac{\| \nabla S_\rho \|^2_{2,\Omega_e} - \| \nabla S_\rho \|^2_{2,\Omega}} {\| \nabla S_\rho \|^2_{2,\mathbf R^d}},
\end{equation}
where the orthogonality is understood with respect to the total
energy norm. The maximum is attained at a smooth distribution
$\rho^+_k \in W^{1/2}(\Gamma).$

Similarly, define
\begin{equation}
\lambda_{k}^- =
\min_{ \rho \perp \{ \rho_1^-, ..., \rho_{k-1}^-\}}
\frac{\| \nabla S_\rho \|^2_{2,\Omega_e} - \| \nabla S_\rho \|^2_{2,\Omega}} {\| \nabla S_\rho \|^2_{2,\mathbf R^d}}.
\end{equation}
The minimum is attained at a smooth distribution $\rho_k^- \in W^{1/2}(\Gamma)$.

The potentials $S_{\rho_k^\pm}$ together with all $S_\chi \in \ker
K  (\chi \in W^{-1/2}(\Gamma)), $  are mutually orthogonal and
complete in the space of all single layer potentials of finite
energy.
\end{theorem}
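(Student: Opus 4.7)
The plan is to reduce the problem to the classical Courant--Fischer min-max for a compact self-adjoint operator, once the right Hilbert space is identified. First, starting from the identity displayed just above the statement, I would rewrite the Rayleigh quotient as
$$\frac{J_e[\rho] - J_i[\rho]}{J_e[\rho] + J_i[\rho]} = \frac{\langle KS\rho, \rho\rangle_{2,\Gamma}}{\langle S\rho, \rho\rangle_{2,\Gamma}} = \frac{\langle K^\ast \rho, \rho\rangle_S}{\langle \rho, \rho\rangle_S},$$
where $\langle u, v\rangle_S := \langle Su, v\rangle_{2,\Gamma}$. By identity (2.9), $\langle \cdot, \cdot\rangle_S$ agrees with the total-energy inner product of the associated single-layer potentials, so its completion yields a Hilbert space $\mathcal{H}_S$ isometric (via $\rho \mapsto S_\rho$) to the single-layer subspace $\mathcal{S} \subset H$ (in $d=2$ subject to $\rho(\mathbf{1})=0$). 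Plemelj's symmetrization $KS = SK^\ast$ gives at once $\langle K^\ast\rho,\sigma\rangle_S = \langle KS\rho,\sigma\rangle_{2,\Gamma} = \langle \rho, K^\ast\sigma\rangle_S$, so $K^\ast$ is self-adjoint on $\mathcal{H}_S$.

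Second, I would verify that $K^\ast$ descends to a compact operator on $\mathcal{H}_S$. The diagonal singularity of $K^\ast(x,y)$ is of order $d-2$ on the smooth boundary $\Gamma$, so $K^\ast$ is smoothing on the Sobolev scale; combined with the fact that $S : W^{-1/2}(\Gamma) \to W^{1/2}(\Gamma)$ is a topological isomorphism (modulo constants in $d=2$), this upgrades the $L^2$-compactness mentioned in the excerpt to compactness on $\mathcal{H}_S$. The spectral theorem for compact self-adjoint operators then furnishes a sequence of real eigenpairs
$$\lambda_1^+ \geq \lambda_2^+ \geq \dots > 0 > \dots \geq \lambda_2^- \geq \lambda_1^-,$$
with associated $\langle \cdot, \cdot\rangle_S$-orthogonal eigenvectors $\rho_k^\pm$ satisfying $K^\ast \rho_k^\pm = \lambda_k^\pm \rho_k^\pm$; the $\lambda_k^+$ (resp.\ $\lambda_k^-$) are attained inductively as maxima (resp.\ minima) of the Rayleigh quotient over the $\langle \cdot, \cdot\rangle_S$-orthogonal complement of the previously chosen $\rho_j^\pm$. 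Since the $S$-inner product corresponds isometrically to the total-energy inner product of the potentials, these min-max problems are exactly the ones stated in the theorem.

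Third, I would establish smoothness of the extremal distributions and completeness. Each $\rho_k^\pm$ satisfies $\lambda_k^\pm \rho_k^\pm = K^\ast \rho_k^\pm$ with $\lambda_k^\pm \neq 0$; iterating this identity and invoking the pseudodifferential smoothing character of $K^\ast$ on a smooth $\Gamma$, a standard bootstrap pushes $\rho_k^\pm$ up the Sobolev scale into $W^{1/2}(\Gamma)$ (and indeed into $C^\infty(\Gamma)$). Completeness follows from the orthogonal decomposition $\mathcal{H}_S = \ker K^\ast \oplus \overline{\mathrm{span}}\{\rho_k^\pm\}$ provided by the spectral theorem; transporting by the isometry $\rho \mapsto S_\rho$ and noting that, by injectivity of $S$ on $W^{-1/2}(\Gamma)$ and $KS = SK^\ast$, the condition $\chi \in \ker K^\ast$ is equivalent to $S_\chi \in \ker K$, yields exactly the complete orthogonal family claimed. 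The main technical obstacle lies precisely in the second paragraph: promoting the $L^2$-compactness of $K^\ast$ to compactness in the weaker $\mathcal{H}_S$-topology, and justifying the regularity bootstrap, both of which hinge on fine mapping properties of the single-layer potential between Sobolev scales on smooth boundaries beyond what is explicitly recorded in the excerpt.
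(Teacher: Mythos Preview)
Your proof is correct and follows the natural direct route: identify the energy space $\mathcal{H}_S \cong W^{-1/2}(\Gamma)$, verify that $K^\ast$ is compact and self-adjoint there, apply the spectral theorem, and recover regularity by an elliptic bootstrap. The paper takes a genuinely different path, via Krein's theory of symmetrizable operators in a space with two norms (Section~\ref{2-norms} and the reference \cite{KPS-ARMA-07}). In that framework one works on the \emph{stronger} space $L^2(\Gamma)$, where $K^\ast$ is already known to be compact (though not self-adjoint), and Krein's spectral permanence shows that the eigenvalues and root vectors computed in $L^2$ coincide with those of the self-adjoint extension to the weaker $S$-completion. This buys two things over your approach: it sidesteps precisely the obstacle you flag in your second paragraph---compactness of $K^\ast$ on $\mathcal{H}_S$ is never checked directly, only on $L^2$---and the regularity $\rho_k^\pm \in W^{1/2}(\Gamma)$ drops out automatically from the two-norm machinery, without invoking any pseudodifferential calculus. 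Conversely, your route is more self-contained for a reader already fluent in $\Psi$DO mapping properties on smooth closed manifolds, and reduces everything to the textbook spectral theorem for compact self-adjoint operators.
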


We refer to \cite{KPS-ARMA-07} for the proof and more details.

The stronger than expected regularity of the eigenfunctions $(
\rho_k^\pm \in W^{1/2}(\Gamma))$ will be explained in Section \ref{2-norms}  as a consequence of a general theory of symmetrizable operators
in a space with two norms.

 The equilibrium distribution of $\Omega$
provides the first function $\rho_0^+$ in this process: $ S
\rho_0^+ = 1, S_{\rho_0^+}|_\Omega = 1.$ The first eigenvalue is
always $\lambda_0^+ =1$ and has multiplicity equal to one.

In conclusion, the spectrum of the Neumann-Poincar\'e operator $K$, multiplicities included, coincides with
the spectrum $(\lambda_k^\pm)$ of Poincar\'e's variational problem, together with possibly the point zero. The
extremal distributions for the Poincar\'e problem are exactly the eigenfunctions of $K$. And, as we shall see, similarly for $K^\ast$.

In practice it is hard to work directly with the NP operator on $L^2(\Gamma)$. Instead, the following
interpretation of the extremal solutions to Poincar\'e's variational problem is simpler and more flexible.
This also goes back to Poincar\'e's memoir \cite{Poi1}, and it was constantly present in the works of potential theory
in the first decades of the twentieth century, cf. for instance \cite{Plemelj}.

Let us start with an eigenfunction $f \in L^2(\Gamma)$ of the operator $K^\ast$. Then
$$K^\ast f = \lambda f \ \Rightarrow K Sf = S K^\ast f = \lambda S f,$$
and by the jump formulas (6)
$$ \partial_n S_f^i = \frac{1-\lambda}{2} f, \ \ \ \partial_n S_f^e = \frac{-1-\lambda}{2} f.$$
The associated energies are
$$ J_i[f] = \int_{\Omega_i} |\nabla S_f|^2 dx = \frac{1-\lambda}{2} \langle Sf, f \rangle,$$
$$ J_e[f] = \int_{\Omega_e} |\nabla S_f|^2 dx = \frac{1+\lambda}{2} \langle Sf, f \rangle.$$
To verify our computations, simply note that
$$ \frac{J_e[f]-J_i[f]}{J_e[f]+J_i[f]} = \lambda.$$

The characteristic feature of the above single layer potential $S_f$ is encoded in
the following statement.

\begin{proposition}
 A pair of harmonic functions $(h_i,h_e) \in {H}$ represents an extremal potential
for Poincar\'e's variational problem (distinct from the equilibrium distribution)
if and only if, there are non-zero constants $\alpha, \beta$
such that
\begin{equation}
 h_i|_\Gamma = \alpha h_e|_\Gamma \ \ \ \partial_n h_i|_\Gamma = \beta \partial_n h_e|_\Gamma.
\end{equation}
\end{proposition}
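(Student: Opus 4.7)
The plan is to derive both implications directly from the single-layer jump formulas (6), combined with the earlier characterization of $\mathcal S$ as precisely the pairs $h \in H$ whose inner and outer traces on $\Gamma$ agree. The spectral parameter of $K^*$ will emerge via the M\"obius dictionary $\lambda = (\alpha+\beta)/(\alpha-\beta)$.

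For the forward implication, suppose $(h_i,h_e) = S_f$ is an extremal with $K^* f = \lambda f$ and $\lambda \neq 1$. The jump formulas for $S_f$ immediately give $h_i|_\Gamma = h_e|_\Gamma = Sf$ (so $\alpha = 1$) together with $\partial_n h_i = (1-\lambda)f/2$ and $\partial_n h_e = -(1+\lambda)f/2$, producing $\beta = (\lambda-1)/(\lambda+1)$, nonzero since $\lambda \neq 1$. This is essentially the computation performed in the paragraph immediately preceding the proposition.

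For the converse, assume (13) with $\alpha, \beta$ nonzero. Rescale to $\tilde h := (h_i, \alpha h_e)$, whose two boundary traces both equal $h_i|_\Gamma$; by the earlier characterization of $\mathcal S$ there is $f \in W^{-1/2}(\Gamma)$ with $\tilde h = S_f$. Inserting the prescribed normal derivatives into the single-layer jump formulas and taking difference and sum yields
\begin{equation*}
f = (\beta-\alpha)\,\partial_n h_e, \qquad K^* f = -(\alpha+\beta)\,\partial_n h_e,
\end{equation*}
so (provided $\alpha \neq \beta$) $K^* f = \lambda f$ with $\lambda = (\alpha+\beta)/(\alpha-\beta)$. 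Hence $f$ is a $K^*$-eigenfunction and $\tilde h = S_f$ is the associated Poincar\'e extremal of Theorem 2.4.

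The chief subtlety is the degenerate case $\alpha = \beta$ together with the precise reading of ``represents''. When $\alpha = \beta$, the pair $(h_i/\alpha, h_e)$ has matching Cauchy data on $\Gamma$, hence glues by unique continuation into a single harmonic function on $\mathbf R^d$ of finite energy decaying at infinity, which vanishes by Liouville; so $h \equiv 0$. Non-trivial instances of (13) therefore satisfy $\alpha \neq \beta$, and the dictionary $\lambda = (\alpha+\beta)/(\alpha-\beta)$ pairs $\alpha \neq 0$ with $\lambda \neq -1$ and $\beta \neq 0$ with $\lambda \neq 1$, the latter matching exactly the exclusion of the equilibrium distribution stated in the proposition. ``Representing'' then amounts to the outer rescaling $h_e \mapsto \alpha h_e$ converting $(h_i,h_e)$ into the genuine single-layer eigenpotential $S_f$.
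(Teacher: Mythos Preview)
Your proof is correct and follows essentially the same route as the paper: rescale the outer piece by $\alpha$ so the pair becomes a single layer potential, then read off the eigenvalue equation for $K^\ast$ from the normal-derivative jump formulas. You are in fact more thorough than the paper's proof, which treats only the converse direction and does not separately address the degenerate case $\alpha=\beta$; your Liouville argument to exclude that case is a legitimate addition, as is your explicit M\"obius dictionary $\lambda=(\alpha+\beta)/(\alpha-\beta)$ relating the proportionality constants to the spectral parameter.
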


\begin{proof} For the proof we simply change $h_e$ into $\alpha h_e$, and remark that this is a
single layer potential of a charge $\rho$. The second proportionality condition implies,
via the jump formulas, $K^\ast S\rho = \lambda S\rho$ for a suitable $\lambda$. By the injectivity of
$S$ we find $K\rho = \lambda \rho$, and the general symmetrization framework implies
$\rho \in W^{1/2}(\Gamma)$.
\end{proof}

Note that in the above proof we could as well renormalize the
normal derivatives and assume that $(\beta h_i, h_e)$ is a double
layer potential of a density $f \in W^{1/2}(\Gamma)$ which turns
out to be an eigenfunction of the NP operator $K$.

\section{Essential spectrum}

A little more than a century ago Torsten Carleman has defended his doctoral dissertation titled "\"Uber das Neumann--Poincar\'esche Problem f\"ur ein Gebiet mit Ecken" \cite{Car}.  While the NP operator is compact on smooth boundaries, the presence of corners produces continua in its (essential) spectrum.
In a tour de force Carleman did solve the singular integral equation governed by the NP operator and analyzed the (asymptotic) structure of its solutions in a domain with one corner. He made use of elementary and very ingenious geometric transformations together with the, at his time new, theory of Fredholm determinants combined with the canonical factorization of entire functions due to Hadamard. Carleman's work, still puzzling in its fine detail, did not attract the visibility it deserves, nor
did the prior results of his predecessors, among which we mention Zaremba \cite{Zaremba}. Carleman returned all his life to spectral analysis questions related to integral equations with a singular kernel. The aim of the present section is to indicate some advances and proofs at the level of current mathematical rigor of Carleman's claims contained in his dissertation.

Only a few years after Carleman's defense, Radon \cite{Radon1,Radon2} developed the theory of measures of bounded variation, and applied it to study the NP operator on the space of continuous functions $C(\partial \Omega)$. He computed the essential spectral radius for boundaries $\partial \Omega$ possessing bounded rotation, validating Carleman's bound for the essential spectrum. Nowadays we know much more: for non-smooth boundaries, the spectrum of the NP operator depends drastically on the underlying space.  For instance, when the NP operator is considered on $L^p(\partial \Omega)$, $p \geq 2$, and $\Omega$ is a curvilinear polygon, the complete spectral picture is available \cite{Mitrea} - and it is entirely different from what appears in the works of Carleman and Radon. More specifically,  the essential spectrum of $K$ is no more contained in the real axis and consists of a collection of closed lemniscates whose shape depends on $p \geq 2$.

Two real dimensions are special, not last due to the presence of the complex variable. Returning to the energy space and the orthogonal decomposition
$$ {H} = {\mathcal S} \oplus {\mathcal D} = {H}_i \oplus {H}_e$$
with the associated orthogonal projections $P_i,P_e,P_s,P_d$, we note that the angle operators $(P_e-P_i)P_s$ and $(P_s-P_d)P_i$ are similar, hence they possess the same spectrum. This simple geometric observation leads to a completely different realization of the NP operator.

More precisely, for a bounded domain
$\Omega$ of the complex plane, define the anti-linear Ahlfors--Beurling operator $T_\Omega$ acting on the Bergman space $L^2_a(\Omega)$,
\begin{equation} \label{eq:beurlingtransform}
T_\Omega f (z) = \frac{1}{\pi} {\rm p. v.} \int_\Omega \frac{\overline{f(\zeta)}}{(\zeta-z)^2} \, dA(\zeta), \qquad f\in L^2_a(\Omega), \, z \in \Omega.
\end{equation}
Above Bergman space $L^2_a(\Omega)$ consists of the holomorphic, square-integrable functions in $\Omega$ with respect to the area measure $dA$.

The angle operators equality implies that $K^\ast \colon W_0^{-1/2} \left( \partial \Omega \right) \to W_0^{-1/2} \left( \partial \Omega \right)$ and $T_\Omega : L^2_a(\Omega) \to L^2_a(\Omega)$ are similar as $\mathbb{R}$-linear operators. The notation  $W_0^{-1/2}$ means all charges orthogonal to the constants.
From the similarity we derive the equality of spectra:
\[\sigma(K) = \sigma_\mathbb{R}(T_\Omega) \cup \{1\}.\]
 Note that if $\lambda$ is in the spectrum of $T_\Omega$, then by anti-linearity so is $e^{i\theta} \lambda$ for any $\theta \in \mathbb{R}$. However, we are interested only in the real spectrum, $K : W^{1/2}(\partial \Omega)/\mathbb{C} \to W^{1/2}(\partial \Omega)/\mathbb{C}$ being similar to a self-adjoint operator over the complex field. Therefore, to determine the spectrum of $K$ using $T_\Omega$ we consider only $\lambda \geq 0$ in the spectrum of $T_\Omega$, and note that the spectrum of $K$ consists of $\pm \lambda$, for all such points $\lambda$, in addition to the simple eigenvalue $1$.

 The existence of a harmonic conjugate of every harmonic function defined in a simply connected domains implies that the spectrum of the NP operator
 on a 2D domain is always symmetric with respect to the origin, with the exception of the simple eigenvalue $1$. This fact is even more transparent in the Ahlfors-Beurling interpretation, see \cite{KPS-ARMA-07}.

 By a happy coincidence, the skew eigenvalue problem
 $$ T_\Omega f = \lambda f, \ \ f \in L^2_a(\Omega)$$
 was well studied in function theory of a complex variables. The positive real eigenvalues are called {\it Fredholm eigenvalues} of the domain
 $\Omega$ and the eigenfunctions are orthogonal and complete in Bergman space (provided the boundary of $\Omega$ is smooth). The vast literature on the subject goes back to Ahlfors \cite{Ahl}, through the exhaustive works of Schiffer \cite{Sch1,Sch2,BS}, touching classical contributions in quasi-conformal mapping theory \cite{Springer}. For instance, Ahlfors \cite{Ahl} observed a connection between the spectral radius of the NP operator (the largest Fredholm eigenvalue of $\Omega$) and the quasiconformal reflection coefficient of $\partial \Omega$. The reflection coefficient is notoriously difficult to compute for general domains which do not have any special geometric structure. Yet, Ahlfors' inequality provides to date nearly all known spectral bounds of the NP operator in the energy norm.

 Combining conformal mapping methods with an exact computation on a wedge, and via the Ahlfors-Beurling realization of the NP operator, the essential spectrum on a bounded planar domain with finitely many corners in its boundary was proved  to be exactly the closed interval
 $$\sigma_{ess}(K, W^{1/2}) = [- 1+\frac{\alpha}{\pi}, 1 + \frac{\alpha}{\pi}],$$
 where $\alpha$ is the least inner angle in the boundary. For a proof see \cite{PP1,PP2}. This is exactly the bound calculated by Carleman via an explicit spectral resolution involving generalized eigenfunctions. Details of his insight remain to be clarified in today's terminology and more rigorous language. See \cite{KLY}.

 Very recent studies confirm and complement the above spectral picture, see \cite{BZ} for a natural Weyl sequence approach on a particular shape and the analysis carried in \cite{Perfekt2}.

 The estimation of the essential spectrum of the NP operator in higher dimensions is more dramatic from the computational point of view, and for this reason even more interesting \cite{HP1,HP2}.

\section{Decay estimates of NP-eigenvalues in two dimensions}\label{sec:decay}

{The present sections brings us very recent contributions referring to the spectral asymptotics of the NP operator in two dimensions.
Throughout this section we assume that the boundary $\Gamma$ is at least $C^{1, \alpha}$-smooth, with $\alpha>0$.
Therefore the NP operator is compact due to the fact that its integral kernel has only a weak singularity.
In this case the spectrum consists of  a sequence of eigenvalues $\{ \lambda_j \}_{j = 0}^\infty$ of finite multiplicities accumulating to $0$, which is the only element of the essential spectrum.}
We enumerate $\{ \lambda_j \}_{j = 0}^\infty$ in the following way:
\begin{equation} \label{enumerated_eigenvalues}
  \lambda_0 = 1  > | \lambda_1 | = | \lambda_2 | \ge \dots \ge | \lambda_{2 n - 1} | = | \lambda_{2 n} | \ge \to 0 \quad  \text{ as } j \to \infty.
\end{equation}

Very few explicit computations in closed form of the eigenvalues of the NP operator are known.
For example, if $\Gamma$ is a circle, we can easily see that $\lambda_j = 0$ ($j = 1, 2, \dots$), since the integral kernel is a constant function.
For ellipses, direct calculations show that the eigenfunctions decay exponentially (see \cite{Ahl,Korn1}).
More precisely, if $\Gamma$ is the ellipse with the long axis $a$ and short axis $b$, one finds from the conformal mapping of the exterior to the disk, that the NP-eigenvalues are:
\begin{equation} \label{ellipse_eigenvalues}
  \pm \left( \frac{a - b}{a + b} \right)^j, \quad j = 1, 2, \dots.
\end{equation}
In general, the decay rate of the eigenvalues of the NP operator depends on the smoothness of $\Gamma$. The following recent theorem due to Miyanishi and Suzuki \cite{MS} offers some precise information.
The ordered {\it singular values} of the NP operator, namely, eigenvalues of $(K^*K)^{1/2}$, are denoted:
\begin{equation*}
  \alpha_0 \ge \alpha_1 \ge \alpha_2 \ge \cdots \to 0.
\end{equation*}

\begin{theorem} \label{theorem_MS}
Let $k \geq 2$ and let $\Gamma$ be a $C^k$ Jordan curve in two real dimensions.
For any $\alpha > - 2 k + 3$,
\begin{equation*}
  \alpha_j = o(j^{\alpha / 2}) \text{ and } \lambda_j = o(j^{\alpha / 2}) \quad \text{ as } j \to \infty.
\end{equation*}
\end{theorem}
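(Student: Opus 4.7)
The plan is to estimate the singular values of $K$ by first extracting from $\Gamma\in C^k$ the regularity of the NP kernel on $\Gamma\times\Gamma$ and then applying a Birman--Solomyak-type Schatten-class estimate tailored to one-dimensional manifolds. The key geometric input is that the apparent $|x-y|^{-1}$ singularity of the double-layer kernel is tamed along the diagonal by the orthogonality $x'(t)\perp n_{x(t)}$.

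Parametrize $\Gamma$ by arclength $t\mapsto x(t)$ and set $k(s,t):=K(x(s),x(t))$. Introducing the divided difference
\[
a(s,t) := \int_0^1 x'\bigl(t + r(s-t)\bigr)\, dr,
\]
one has $x(s)-x(t) = (s-t)\,a(s,t)$ with $a\in C^{k-1}$ and $a(t,t)=x'(t)\perp n_{x(t)}$. Consequently
\[
k(s,t) = \frac{1}{2\pi}\cdot\frac{\langle a(s,t),\, n_{x(t)}\rangle}{(s-t)\,|a(s,t)|^2}.
\]
The numerator vanishes along the diagonal, so Hadamard's lemma extracts the factor $(s-t)$ and leaves a quotient in $C^{k-2}(\Gamma\times\Gamma)$, with diagonal value $\kappa(t)/(4\pi)$.

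The next step converts the $C^{k-2}$ regularity into Schatten-class membership via a Birman--Solomyak-type estimate: an integral operator on $L^2(M)$, with $M$ a compact $1$-manifold and kernel of class $C^m$ on $M\times M$, lies in the Schatten class $\mathcal{C}^p$ for every $p>2/(2m+1)$. One proves this by expanding the kernel in a double Fourier or wavelet basis, grouping coefficients into dyadic blocks, and summing the block Schatten norms; the exponent $2/(2m+1)$ is sharp, reflecting the half-derivative gain available only in one dimension through the Sobolev embedding $H^{1/2+\varepsilon}(\Gamma)\hookrightarrow L^\infty(\Gamma)$. Taking $m=k-2$ places $K\in\mathcal{C}^p$ for all $p>2/(2k-3)$. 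Since a decreasing sequence with $\sum\alpha_j^p<\infty$ automatically satisfies $\alpha_j = O(j^{-1/p})$, letting $p$ tend to $2/(2k-3)$ from above gives $\alpha_j(K) = o(j^{-(k-3/2)+\varepsilon})$ for every $\varepsilon>0$, i.e.\ $\alpha_j(K) = o(j^{\alpha/2})$ for all $\alpha>-2k+3$. The eigenvalue bound follows from Weyl's inequality $|\lambda_j|\leq\alpha_j(K)$.

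The main obstacle is the sharp Schatten exponent $2/(2m+1)$ in the Birman--Solomyak estimate above. A naive treatment---differentiating the kernel $m$ times in the first variable to obtain the bounded mapping $K:L^2(\Gamma)\to H^m(\Gamma)$ and then factoring through the compact embedding $H^m(\Gamma)\hookrightarrow L^2(\Gamma)$---produces only $\alpha_j=O(j^{-m})$ and therefore the weaker range $\alpha>-2k+4$. Recovering the missing half derivative is a genuinely one-dimensional effect that demands the refined Birman--Solomyak bookkeeping; this is where the technical heart of the argument lies. The kernel regularity in the first paragraph and the extraction of the decay rate from Schatten membership are then routine.
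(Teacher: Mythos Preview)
Your approach is essentially the same as the paper's: extract the $C^{k-2}$ regularity of the NP kernel on $\Gamma\times\Gamma$ and then apply a kernel-regularity $\Rightarrow$ Schatten-class result giving $K\in\mathcal{C}^p$ for all $p>2/(2k-3)$. The paper cites Delgado--Ruzhansky \cite{DR} for this step rather than a Birman--Solomyak-type estimate, but the content and the sharp exponent are identical.

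One small correction: the pointwise inequality $|\lambda_j|\le\alpha_j$ that you call ``Weyl's inequality'' is \emph{not} true in general for non-normal compact operators (easy $2\times2$ counterexamples exist). What you actually need---and what suffices---is Weyl's theorem that $K\in\mathcal{C}^p$ implies $\sum_j|\lambda_j|^p<\infty$; this yields $|\lambda_j|=o(j^{-1/p})$ for every $p>2/(2k-3)$ and hence the stated eigenvalue decay.
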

The proof relies on a result of Delgado and Ruzhansky \cite{DR} which links the regularity of the integral kernel of the Neumann-Poincar\'{e} operator to the Shatten-von Neumann class $\mathcal{C}^r(L^2(\Gamma))$, $r > 2/(2 k - 3)$.

If $\Gamma$ does not meet the smoothness assumption in Theorem~\ref{theorem_MS}, then the decay picture is more involved.
A recent work of Jung and Lim \cite{JungLim} elucidates this setting.
\begin{theorem} \label{theorem_JL}
Let $\Omega$ be a simply connected bounded domain having $C^{1+p, \alpha}$ boundary with $p \ge 0$, $\alpha \in (0, 1)$, and $p + \alpha > 1/2$.
Let $\{ \lambda_j \}_{j = 0}^\infty$ denote the eigenvalues of the associated NP operator, enumerated as \eqref{enumerated_eigenvalues}.
Then there exists a constant $C > 0$ independent of $j$ such that
\begin{equation*}
  \big| \lambda_{2j-1} \big| = \big| \lambda_{2j} \big| \le C j^{-p-\alpha+1/2} \quad \text{for all } j = 1, 2, 3, \dots.
\end{equation*}
\end{theorem}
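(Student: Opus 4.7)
The natural approach is to exploit the Ahlfors--Beurling realization recalled in the previous section: modulo the simple eigenvalue $1$, the non-zero NP eigenvalues in 2D are, up to the sign-pairing $\pm\lambda$, exactly the positive real eigenvalues of the anti-linear Beurling-type operator $T_\Omega$ on the Bergman space $L_a^2(\Omega)$. It thus suffices to bound the $j$-th singular value of $T_\Omega$ by $C j^{-p-\alpha+1/2}$ and then halve the index to account for the sign symmetry.

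First I would pull $T_\Omega$ back to the unit disk via a Riemann map $\phi\colon\mathbb{D}\to\Omega$. By the Kellogg--Warschawski theorem, the hypothesis $\partial\Omega\in C^{1+p,\alpha}$ forces $\phi'\in C^{p,\alpha}(\overline{\mathbb{D}})$ bounded away from zero, so $\log\phi'$ is likewise $C^{p,\alpha}$. The weighted composition $C_\phi f=(f\circ\phi)\,\phi'$ is a unitary isomorphism $L_a^2(\Omega)\to L_a^2(\mathbb{D})$, and a direct change of variables in the kernel $1/(\zeta-z)^2$ presents $U:=C_\phi T_\Omega C_\phi^{-1}$ as a Beurling-type principal value integral on $\mathbb{D}$ twisted by boundary values of $\phi'$. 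In the orthonormal Bergman basis $e_n(z)=\sqrt{(n+1)/\pi}\,z^n$, the matrix of $U$ takes the form of a Grunsky-type array $(b_{mn})$ whose entries are explicitly determined by Fourier coefficients of $\log\phi'|_{\partial\mathbb{D}}$.

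The regularity now enters in the classical way: a $C^{p,\alpha}$ function on the circle has Fourier coefficients of order $O(n^{-p-\alpha})$, and the Grunsky entries inherit a corresponding off-diagonal decay $|b_{mn}|\lesssim(1+|m-n|)^{-p-\alpha}$ (with an additional $(mn)^{-1/2}$ normalization reflecting the principal-value, order-zero nature of $T_\Omega$). Approximating $U$ by a finite-rank operator supported on $\{m,n\le N\}$ and estimating the tail in Hilbert--Schmidt norm then yields $s_j(U)\le Cj^{-p-\alpha+1/2}$; the hypothesis $p+\alpha>1/2$ is exactly the threshold at which the tail sum converges. The sign-pairing of the 2D NP spectrum promotes this bound to the claimed inequality on $|\lambda_{2j-1}|=|\lambda_{2j}|$.

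The main obstacle is the precise identification of $U$ in the Bergman basis as a Grunsky-type matrix whose entries are cleanly controlled by Fourier coefficients of $\log\phi'$. Since $T_\Omega$ is only a principal-value singular integral, and not Hilbert--Schmidt to start with, the cancellations in $(\zeta-z)^{-2}$ must be tracked meticulously through the conformal change of variables; here the classical interplay among the Beurling transform, Faber polynomials and Grunsky's inequalities carries the weight. Once this reduction is clean, the Hölder-to-Fourier decay estimate and the min-max truncation argument are routine, though extracting the sharp exponent $-p-\alpha+1/2$ requires a careful counting of off-diagonal bands.
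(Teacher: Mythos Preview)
Your strategy is sound in spirit and shares the same ultimate engine as the original argument---Grunsky coefficients together with Kellogg--Warschawski boundary regularity---but the implementation differs from the one described in the paper. Jung and Lim work with the \emph{exterior} Riemann map $\Psi$ from $\{|z|>1\}$ onto $\mathbb{C}\setminus\overline\Omega$: the polar coordinates pushed forward by $\Psi$ give a curvilinear coordinate system near $\Gamma$, the eigenfunctions of $K$ are represented as trigonometric functions in this system, and the eigenvalues are then written \emph{explicitly} in terms of the classical Grunsky coefficients of $\Psi$. The decay bound follows directly from the H\"older-to-Fourier estimate applied to those coefficients. By contrast, you pass through the Ahlfors--Beurling realization on the interior Bergman space, conjugate by the \emph{interior} map $\phi$, and aim for singular-value bounds via finite-rank truncation and a Hilbert--Schmidt tail estimate. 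Their route yields more---actual formulae rather than upper bounds---and sidesteps the messy step you flag as the main obstacle (tracking the principal-value cancellations of $(\zeta-z)^{-2}$ through the conformal change of variables). Your route is more operator-theoretic and in principle more portable, but you should be aware that the natural basis in which $T_\Omega$ becomes the Grunsky matrix is the Faber basis associated to the \emph{exterior} map, not the pushforward of $\{z^n\}$ under the interior map; working with $\phi$ will produce an equivalent but less transparent array.

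One point to watch: your claim that the Hilbert--Schmidt tail delivers the sharp exponent $-p-\alpha+\tfrac12$ deserves more than ``careful counting of off-diagonal bands.'' With the naive entrywise bound $|b_{mn}|\lesssim (m+n)^{-p-\alpha}$ the tail $\sum_{m+n>N}(m+n)^{-2(p+\alpha)}$ only converges for $p+\alpha>1$, not $p+\alpha>\tfrac12$, so a crude Hilbert--Schmidt argument will not cover the full range of the theorem. You will need either a finer pointwise estimate on $b_{mn}$ that separates the $\sqrt{mn}$ weight from the genuine decay of the underlying $c_{mn}$, or a Schur/operator-norm bound on the tail rather than a Frobenius one. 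The explicit-formula approach in the paper avoids this issue because it controls each eigenvalue individually rather than in aggregate.
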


The proof invokes the Riemann map $\Psi$ from the exterior of a disk to the exterior of $\Omega$.
More specifically, the conformal map $\Psi$ induces an orthogonal coordinate system outside $\Omega$ by push-forward of the polar coordinate system.
The eigenfunctions of the NP operator are expressed as trigonometric functions parametrized by the curvilinear coordinate system.
Rather explicit formulae for the eigenvalues are then derived, from which the stated decay rate follows.

Assuming $\Gamma$ is a $C^\infty$ Jordan curve we infer from the preceding theorem that $\lambda_j = o(j^{- \infty})$ as $j \to \infty$. However, in the case of an ellipse, we know from \eqref{ellipse_eigenvalues} that $\lambda_j$ decays exponentially fast as $j \to \infty$.
It is therefore expected that real analyticity of $\Gamma$ implies an exponential decay of the associated NP-eigenvalues.

In order to prove the later claim, we pass to complex coordinates and identify $\mathbb{R}^2$ with $\mathbb{C}$.
Let $S^1$ be the unit circle and $Q: S^1 \to \Gamma \subset \mathbb{C}$ be a regular parametrization of $\Gamma$.
Let
\begin{equation*}
q(t) := Q(e^{i t}), \quad t \in \mathbb{R}.
\end{equation*}
Then $q$ is real analytic if $\Gamma$ is real analytic.
Note that $q(t + 2 \pi) = q(t)$ for any $t \in \mathbb{R}$.

Suppose now that $\Gamma$ is real analytic.
Then $Q$ admits an extension as an analytic mapping from an annulus
\begin{equation*}
A_\epsilon := \{ \tau \in \mathbb{C} : e^{- \epsilon} < | \tau| < e^\epsilon\}
\end{equation*}
for some $\epsilon > 0$ onto a tubular neighborhood of $\Gamma$ in $\mathbb{C}$.
The function $q$ is an analytic function from $\mathbb{R} \times i (- \epsilon, \epsilon)$ onto a tublar neighborhood of $\Gamma$.

For a real analytic parametrization $q$ of $\Gamma$, we consider the numbers $\epsilon$ such that $q$ satisfies the following additional condition:
\begin{center}
(G) \ if $q(t) = q(s)$ for $t \in [- \pi, \pi] \times i (- \epsilon, \epsilon)$ and $s \in [- \pi, \pi)$, then $t = s$.
\end{center}
It is worth emphasizing that condition (G) is weaker than univalence.
It only requires that $q$ attains the value $q(s)$ for $s \in [- \pi, \pi)$ only at $s$.
The condition (G) is equivalent to the fact that the only points mapped by the function $q: \mathbb{R} \times i (- \epsilon, \epsilon) \to \mathbb{C}$ to $\Gamma$ are those on the real line.
Since $Q$ is one-to-one on $\Gamma$, the extended function is univalent in $A_\epsilon$ if $\epsilon$ is sufficiently small.
Therefore, the condition (G) is fullfied if $\epsilon$ is small.
We denote the supremum of such a threshold $\epsilon$ by $\epsilon_q$ and call it the {\it modified maximal Grauert radius} of $q$.
We emphasize that $\epsilon_q$ may differ depending on the parametrization $q$.
Let
\begin{equation*}
\epsilon_\Gamma := \sup_q \epsilon_q,
\end{equation*}
where the supremum is taken over all regular real analytic parametrization $q$ of $\Gamma$.
We call $\epsilon_\Gamma$ the {\it modified maximal Grauert radius} of $\Gamma$.

The following theorem \cite{AKM} holds.
\begin{theorem} \label{theorem_AKM}
Let $\Omega$ be a bounded planar domain with the analytic boundary $\Gamma$ and $\epsilon_\Gamma$ be the modified maximal Grauert radius of $\Gamma$. 
Let $\{ \lambda_j \}_{j = 0}^\infty$ be the eigenvalues of the NP operator enumerated as in \eqref{enumerated_eigenvalues}.
Then for any $\epsilon < \epsilon_\Gamma,$ there exists a constant $C$ with the property:
\begin{equation}
  | \lambda_{2 j - 1} | = | \lambda_{2 j} | \le C e^{- \epsilon j} \label{exponential_estimate}
\end{equation}
for all $j$.
\end{theorem}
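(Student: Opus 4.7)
The plan is to exploit the Ahlfors--Beurling realization of the NP operator recalled in the excerpt, combined with the conformal-coordinate expansion underlying Theorem~\ref{theorem_JL}, and then to feed in the analytic extension of the Riemann map afforded by the definition of $\epsilon_\Gamma$. By the identity $\sigma(K) = \sigma_\mathbb{R}(T_\Omega) \cup \{1\}$ and the pairing $|\lambda_{2j-1}| = |\lambda_{2j}|$, the task reduces to bounding the $j$-th singular value of the anti-linear Ahlfors--Beurling operator $T_\Omega$ on the Bergman space $L^2_a(\Omega)$ by $C e^{-\epsilon j}$.

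Fix $\epsilon < \epsilon' < \epsilon_\Gamma$. From the definition of the modified maximal Grauert radius and condition (G), there is a regular analytic parametrization $q$ of $\Gamma$ extending to $\mathbb{R}\times i(-\epsilon',\epsilon')$ with $q^{-1}(\Gamma)=\mathbb{R}$; equivalently, the exterior Riemann map $\Psi: \{|\tau|>1\} \to \Omega_e$ with standard normalization at infinity admits a univalent holomorphic extension to $\{|\tau|>e^{-\epsilon'}\}$, mapping a one-sided collar of the unit circle onto a tubular neighborhood of $\Gamma$. This is the geometric input that replaces mere $C^{1+p,\alpha}$-regularity in Theorem~\ref{theorem_JL}.

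With $\Psi$ extended, I would then reuse the Jung--Lim machinery of Theorem~\ref{theorem_JL}: push the NP eigenvalue problem forward under $\Psi$ and express the NP eigenfunctions, in the curvilinear coordinates induced by $\Psi$ on the collar, as trigonometric functions $e^{in\theta}$. The eigenvalues emerge as Fourier coefficients of a periodic function built from $\Psi'$, the boundary curvature and analogous conformally invariant data. Each ingredient is, by Step~2, holomorphic and periodic on the strip of width $\epsilon'$ around $\mathbb{R}$, and the Paley--Wiener principle for Fourier coefficients of strip-analytic periodic functions then yields $|\lambda_{2j-1}|=|\lambda_{2j}| \le C e^{-\epsilon' j} \le C e^{-\epsilon j}$. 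Since $\epsilon<\epsilon_\Gamma$ is arbitrary, the theorem follows.

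The hardest step, I expect, is the third one: deriving a sufficiently explicit Fourier-coefficient formula for the NP eigenvalues that involves only data holomorphic on the extended annulus. The Jung--Lim derivation under finite regularity introduces asymptotic remainders which must here be shown to be globally analytic, not just smooth. A secondary subtlety is that condition (G) is \emph{weaker} than univalence, so one must verify that it still suffices to make the push-forward coordinate system well defined on the one-sided collar, and that the trigonometric-eigenfunction calculation is uncontaminated by extraneous preimages of $\Gamma$ inside the annulus. Finally, the exponential rate must come out with exponent $\epsilon$ rather than $\epsilon/2$, which is ensured by using the full two-sided analyticity on $A_{\epsilon'}$ (as opposed to only one-sided extension) together with the room $\epsilon<\epsilon'<\epsilon_\Gamma$ provided by the supremum in the definition of $\epsilon_\Gamma$.
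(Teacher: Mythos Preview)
Your proposal diverges from the paper's route and contains a genuine gap.

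\textbf{What the paper actually does.} The survey records that the proof in \cite{AKM} rests on two ingredients: the Weyl--Courant min--max principle (in the spirit of Little--Reade \cite{LR}) and a Paley--Wiener type lemma. Concretely, one fixes an arbitrary parametrization $q$ with $\epsilon_q>\epsilon$, pulls back the NP integral kernel to $S^1\times S^1$, and uses condition~(G) to show that the pulled-back kernel extends holomorphically to a product of annuli $A_\epsilon\times A_\epsilon$ (the role of~(G) is precisely to guarantee that the only diagonal singularity $q(t)=q(s)$ stays on the real locus). A Paley--Wiener estimate then shows that the Fourier coefficients of the kernel decay like $e^{-\epsilon(|m|+|n|)}$, so truncation at frequency $j$ gives a rank-$O(j)$ approximation with error $O(e^{-\epsilon j})$; min--max converts this into the singular-value bound. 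No Riemann map, no Jung--Lim formula, and no Ahlfors--Beurling transform enter.

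\textbf{The gap in your plan.} You assert that the existence of \emph{some} parametrization $q$ with modified Grauert radius $>\epsilon'$ is ``equivalently'' the statement that the exterior Riemann map $\Psi$ extends univalently to $\{|\tau|>e^{-\epsilon'}\}$. This equivalence is unjustified: $\epsilon_\Gamma$ is defined as a supremum over \emph{all} regular analytic parametrizations, and the Riemann-map parametrization $q_\Psi$ need not realize that supremum. Your route through the Jung--Lim machinery is inherently tied to $\Psi$, so at best it would deliver the bound with exponent $\epsilon_{q_\Psi}$, not $\epsilon_\Gamma$. The paper's kernel-based argument avoids this because it can be run with \emph{any} parametrization $q$, and one simply chooses $q$ so that $\epsilon_q>\epsilon$. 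Separately, the Ahlfors--Beurling reduction you open with plays no role in the rest of your sketch and can be dropped; and the step you flag as hardest---extracting a clean Fourier-coefficient formula for the eigenvalues from Jung--Lim with analytic remainders---is exactly the step the min--max approach renders unnecessary.
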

The proof relies on the Weyl-Courant min-max principle (see \cite{LR}) and a Paley-Wiener type lemma.

For circles, the integral kernel is a constant function, which means that $\epsilon_\Gamma = \infty$.
From Theorem~\ref{theorem_AKM}, $| \lambda_j | \le C e^{- c j}$ for any $c > 0$.
Indeed, $\lambda_j = 0$ for $j \ge 1$. It is shown in \cite{AKM} that $\epsilon_q = \log{\frac{a + b}{a - b}}$ for the special regular parametrization $q$ of an ellipse. So, in view of \eqref{ellipse_eigenvalues}, the estimate \eqref{exponential_estimate} is optimal for ellipses. According to \cite{Sch1}, eigenvalues of the NP operator are invariant under the M\"{o}bius transformations.
Hence the eigenvalues of the NP operator for lima\c{c}ons of Pascal are identical to those for an ellipse.
The modified maximal Grauert radius of the lima\c{c}on is calculated in \cite{AKM}, which shows that \eqref{exponential_estimate} is optimal for the lima\c{c}on.

It is not known whether the estimate \eqref{exponential_estimate} is optimal or not in general, and if it is optimal, whether \eqref{exponential_estimate} holds for $\epsilon = \epsilon_\Gamma$ or not.

\section{Spectral properties of  the NP operator on three dimensional smooth domains}

As seen in the previous section, the decay rates of the NP eigenvalues in two dimensions differ depending on the smoothness of the boundary $\Gamma=\partial\Omega$. However, one dimension higher, the case $d = 3$ reveals a totally different picture, encoded in the so-called Weyl's law for NP-eigenvalues. The present section collects some very recent result along these lines.

Let $\Omega$ be a three dimensional $C^{1, \alpha}$ bounded domain.  As already mentioned, we know that $K$ is a compact operator on $L^2(\Gamma)$  (or $W^{1/2}(\Gamma)$) and the set of its eigenvalues consists of at most countable set of real numbers, with $0$ the only possible limit point. It is also known that the eigenvalues of the NP operator lie in the interval $(-1, 1]$ and  $1$ is the eigenvalue corresponding to the constant eigenfunction. Unlike the two-dimensional case, in three dimensions NP eigenvalues $\lambda_j$ behaves like $Cj^{-1/2}$ as $j \to \infty$ where $C$ is a constant determined by the surface geometry such as the Willmore energy and the Euler characteristic, as the following theorem shows. The {\it Willmore energy} on $\Gamma$ is defined to be
\begin{equation}
\label{definition of Willmore energy}
W=W(\Gamma):=\int_{\Gamma} H^2(x)\; dS_{x}
\end{equation}
where $H(x)$ is the mean curvature of the surface $\Gamma$.

The following result was obtained in \cite{Miyanishi:Weyl}:
\begin{theorem}\label{main}
Let $\Omega\subset{{\mathbb{R}}^3}$ be a $C^{2, \alpha}$ bounded domain for some $\alpha>0$.
Let $\lambda_j$ be NP eigenvalues on $\Gamma=\partial\Omega$ enumerated counting multiplicities in the following way
\begin{equation}
1=\lambda_0 > |\lambda_1| \geq |\lambda_2| \geq\; \cdots .
\end{equation}
Then
\begin{equation}\label{Weyl NP}
|\lambda_j| \sim 2\Big(\frac{3W - 2\pi \chi}{128 \pi} \Big)^{1/2}  j^{-1/2}\quad \text{as}\ j\rightarrow \infty,
\end{equation}
where $W$ and $\chi$ are the Willmore energy and the Euler characteristic of $\Gamma=\partial \Omega$, respectively.
\end{theorem}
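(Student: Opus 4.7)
The idea is to identify the symmetrized NP operator as a classical pseudodifferential operator of order $-1$ on the compact $2$-manifold $\Gamma$, extract the Weyl-type asymptotic from its principal symbol, and pin down the constant in \eqref{Weyl NP} by averaging the squared symbol over the cosphere bundle and invoking Gauss--Bonnet. Because $SK^* = KS$ by Plemelj and $S$ is strictly positive on $L^2(\Gamma)$, the conjugate $\tilde K := S^{1/2} K^* S^{-1/2}$ is $L^2$-self-adjoint and similar to $K^*$, so the two operators share the eigenvalues $\{\lambda_j\}$ with multiplicities; I will work throughout with $\tilde K$.

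Near any $x_0 \in \Gamma$ I parametrize the surface by its tangent plane, so that locally $\Gamma$ is the graph $y_3 = \tfrac{1}{2}\langle \mathrm{II}_{x_0} u', u'\rangle + O(|u'|^3)$. The NP kernel $\tfrac{1}{2\pi}\langle x - y, n_x\rangle/|x-y|^3$ then expands, to leading order, into a homogeneous degree-$(-1)$ kernel proportional to $\langle \mathrm{II}_{x_0} u', u'\rangle / |u'|^3$, so $K^* \in \Psi^{-1}_{\mathrm{cl}}(\Gamma)$. Using the planar Fourier identity for $u_j^2/|u|^3$ and diagonalising $\mathrm{II}_x$ into principal curvatures $\kappa_1, \kappa_2$, a direct computation yields the principal symbol
\[ \sigma(\tilde K)(x,\xi) = \frac{\kappa_1 \xi_2^2 + \kappa_2 \xi_1^2}{2|\xi|^3} = \frac{2H(x)|\xi|^2 - \langle \mathrm{II}_x \xi, \xi\rangle}{2|\xi|^3}, \]
because $S \in \Psi^{-1}_{\mathrm{cl}}(\Gamma)$ and conjugation by $S^{\pm 1/2}$ preserves principal symbols.

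The remainder is the standard Weyl counting for self-adjoint pseudodifferential operators of negative order. The H\"ormander--Shubin formula, together with the degree-$(-1)$ homogeneity of $\sigma$ in $\xi$, gives
\[ \#\{j : |\lambda_j| > t\} \sim \frac{1}{(2\pi)^2}\,\mathrm{vol}\bigl\{(x,\xi) \in T^*\Gamma \setminus 0 : |\sigma(\tilde K)(x,\xi)| > t\bigr\} = \frac{1}{8\pi^2 t^2} \int_\Gamma \!\int_{S^1} |\sigma(\tilde K)(x,\omega)|^2 \, d\omega \, dS(x). \]
A short trigonometric integration then yields $\int_{S^1} |\sigma(\tilde K)(x,\omega)|^2\,d\omega = \tfrac{\pi}{4}\bigl(3H(x)^2 - K_G(x)\bigr)$, and Gauss--Bonnet, $\int_\Gamma K_G \,dS = 2\pi\chi$, collapses the counting formula to $\#\{j : |\lambda_j| > t\} \sim (3W - 2\pi\chi)/(32\pi\, t^2)$, which, upon inversion, is exactly \eqref{Weyl NP}.

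The principal technical difficulty lies in the pseudodifferential set-up: one has to verify that the full NP kernel on the curved surface $\Gamma$ (and not merely its leading singularity at a single point) actually belongs to the classical calculus $\Psi^{-1}_{\mathrm{cl}}(\Gamma)$, and that it can be composed cleanly with the fractional powers $S^{\pm 1/2}$ in the same calculus, so that the lower-order remainders do not disturb the Weyl count. A secondary subtlety is that $\sigma(\tilde K)$ is not sign-definite on $T^*\Gamma$ whenever $\Gamma$ has points of negative Gaussian curvature, so the general self-adjoint (non-elliptic) version of the Weyl theorem is needed rather than its simpler positive-symbol counterpart.
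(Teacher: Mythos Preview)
Your outline is essentially the route the paper (following Miyanishi) takes: recognize the symmetrized NP operator as a classical pseudodifferential operator of order $-1$ on $\Gamma$, compute its principal symbol in terms of the second fundamental form, and feed the symbol into a Birman--Solomyak/H\"ormander--Shubin Weyl formula; the Gauss--Bonnet reduction to $3W-2\pi\chi$ is exactly how the constant arises. Your symbol calculation and the trigonometric integral are correct, and the final constant matches.

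There is, however, one genuine gap. The theorem is stated for $C^{2,\alpha}$ boundaries, while your entire argument lives in the classical calculus $\Psi^{-1}_{\mathrm{cl}}(\Gamma)$, which presupposes a $C^\infty$ surface: the local graph expansion, the composition with $S^{\pm 1/2}$, and the Weyl law for negative-order $\Psi$DO's all require smooth symbol classes. On a $C^{2,\alpha}$ surface the remainder after peeling off the leading singularity is only H\"older, not a lower-order classical symbol, so one cannot simply invoke the $\Psi$DO Weyl theorem. The paper explicitly flags this: the smooth case is handled by the pseudodifferential machinery, but the passage to $C^{2,\alpha}$ requires a separate \emph{perturbation argument}---approximating $\Gamma$ by smooth surfaces and showing that the spectral asymptotics are stable under the resulting operator perturbation (the point being that the perturbation is compact of sufficiently low Schatten order not to disturb the $j^{-1/2}$ coefficient). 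Your ``principal technical difficulty'' paragraph gestures at the issue but does not address it; without this step the proof only covers the $C^\infty$ case.
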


We mention that
$$
\Big(\frac{3W(\Gamma) - 2\pi \chi(\Gamma)}{128 \pi} \Big)^{1/2} \geq \frac{1}{4},
$$
and that equality holds if and only if $\Gamma=S^2$. It is because $W(\Gamma)\geq 4\pi$ where the equality holds if and only if $\Gamma=S^2$ (See \cite{Ma-Ne} and reference therein) and
$\chi \leq 2$. Thus we infer from \eqref{Weyl NP} that the NP operator has infinitely many eigenvalues, and hence has infinite rank, which answers a question raised in \cite{KPS-ARMA-07}. Another interesting consequence is that the asymptotic behavior (\ref{Weyl NP}) is M\"obius invariant. It is so because the Willmore energy is invariant under M\"obius transformations of ${\mathbb{R}}^3$ \cite{Bl} and the Euler characteristic is topologically invariant. More consequences of (\ref{Weyl NP}) can be found in \cite{Miyanishi:Weyl}.

If $\Gamma=S^2$, namely, $\Omega$ is a ball, then Poincar\'e \cite{Poi1} proved that the NP eigenvalues are $1/(2k+1)$ for $k = 0, 1, 2\, \ldots$ and their multiplicities are $2k +1$. We may enumerate them as
$$
{\underbrace{\frac{1}{1}}_{1}, \underbrace{\frac{1}{3}, \frac{1}{3}, \frac{1}{3}}_{3}, \underbrace{\frac{1}{5}, \frac{1}{5}, \cdots, \frac{1}{5}}_{5}, \cdots, \underbrace{\frac{1}{(2k+1)}, \cdots, \frac{1}{(2k+1)}}_{2k+1}}, \cdots,
$$
and see that the $j=k^2$th eigenvalue satisfies
$$
\lambda_j=\frac{1}{2k+1} \sim \frac{1}{2} j^{-1/2}.
$$
Since ${W(S^2) =4\pi}$ and $\chi(S^2)=2$, it coincides with \eqref{Weyl NP}

The NP operator on a smooth surface is a pseudo-differential operator of order $-1$ and the known formulas for the eigenvalue asymptotics of pseudo-differential operators in \cite{BiS} can be applied with a few modifications. For less smooth case, namely, the case of a $C^{2,\alpha}$ smooth boundary, an additional perturbation argument is applied  to derive \eqref{Weyl NP}.

The spectral structure of the NP operator on a non-convex domain in $\mathbb{R}^3$ seems quite different from that on convex domains. As we see above, the NP eigenvalues on balls are all positive. A negative NP eigenvalue was found an oblate spheroid in \cite{Ah2}. However, existence of negative NP eigenvalues has been a long standing question. Recently, it is proved that if there is a point of non-convexity on the boundary of the domain, then the NP operator on either the domain or its inversion should have a negative eigenvalue \cite{JiKang}. It is also proved recently in \cite{AKJiKKM} that the NP operator on the standard torus has infinitely many negative eigenvalues as well as infinitely many positive ones.

We now present recent results on Weyl-type asymptotics of NP eigenvalues on non-convex domains. For that, let $\lambda_j^+$ be positive NP eigenvalues enumerated in descending order counting multiplicities, namely,
\begin{equation}\label{spec}
1=\lambda_0^{+} >  \lambda_1^{+} \geq \lambda_2^{+} \geq \dots ,
\end{equation}
and let $\lambda_j^-$ be negative NP eigenvalues enumerated in ascending order,
\begin{equation}\label{spec2}
\lambda_1^{-} \le \lambda_2^{-} \le \dots.
\end{equation}
It is \emph{a priori} possible that the set of negative eigenvalues is finite or even void.

The following theorem is obtained in \cite{Miyanishi-Rozenblum}.

\begin{theorem}\label{mainPM}
Let $\Omega\subset {\mathbb{R}}^3$ be a bounded domain with $C^\infty$ boundary. Then
\begin{equation}\label{AsPM}
    \lambda^{\pm}_j\sim\ \pm A_{\pm}(\Gamma)^{\frac12}j^{-\frac12},\ \  j\to\infty,
\end{equation}
where
\begin{equation}\label{APM}
    A_{\pm}(\Gamma)=\frac{1}{32\pi^2}\int_{\Gamma}{dS_{x}}\int_0^{2\pi}[(k_1({x}) \cos^2\theta +k_2({x})\sin^2\theta)_{\mp}]^2d\theta.
\end{equation}
Here $k_1({x}), k_2({x})$ are the principal curvatures of the surface $\partial\Omega$ at the point ${x}$ under the direction of the normal vector chosen to be the exterior one. The subscripts $\mp$ in \eqref{APM}
respectively indicate the positive and negative parts, namely,
$$
z_+=\max (z,0),\quad
z_-=\min (z, 0).
$$
If the coefficient $A_+(\Gamma)$ or $A_-(\Gamma)$ turns out to be zero, formula \eqref{AsPM} should be understood as $\lambda^{\pm}_j=o(j^{-\frac12})$.
\end{theorem}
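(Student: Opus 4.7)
The plan is to realize $K^\ast$ as a classical pseudo-differential operator of order $-1$ on the closed surface $\Gamma$, compute its principal symbol in terms of the principal curvatures, reduce to a genuinely self-adjoint operator via Plemelj's symmetrization, and then apply the Birman--Solomyak Weyl-type asymptotic formula separately to the positive and negative parts of the resulting symbol.

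First I would fix $x_0 \in \Gamma$ and introduce local coordinates aligned with the principal directions, so that $\Gamma$ is locally the graph of $h(y_1,y_2) = \tfrac{1}{2}(k_1 y_1^2 + k_2 y_2^2) + O(|y|^3)$ with the normal convention of the theorem. Writing the kernel of the NP operator (with the factor $2$ absorbed) as $(x-y)\cdot n_x/(2\pi |x-y|^3)$, a short expansion gives the diagonal profile
\[
-\frac{1}{4\pi}\cdot \frac{k_1 y_1^2 + k_2 y_2^2}{(y_1^2+y_2^2)^{3/2}}.
\]
Using $y_iy_j/|y|^3 = \delta_{ij}/|y| - \partial_i\partial_j|y|$ together with the $\mathbb{R}^2$ Fourier pairs $\widehat{1/|y|} = 2\pi/|\xi|$ and $\widehat{|y|} = -2\pi/|\xi|^3$, a direct computation yields the principal symbol
\[
\sigma_{-1}(K^\ast)(x,\xi) \;=\; -\frac{1}{2|\xi|}\bigl(k_1(x)\cos^2\theta + k_2(x)\sin^2\theta\bigr),
\]
where $\theta$ is the argument of $\xi$ in the tangent frame; the $\cos^2/\sin^2$ swap produced by the Fourier transform amounts to a $\pi/2$ rotation of $\theta$, which is immaterial after integration over $[0,2\pi]$.

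Since $K^\ast$ is not $L^2(\Gamma)$-self-adjoint, I would pass to the similar operator $\tilde K := S^{1/2} K^\ast S^{-1/2}$, which by Plemelj's identity $KS = SK^\ast$ is self-adjoint on $L^2(\Gamma)$ (modulo the equilibrium subspace) and has the same real spectrum with identical multiplicities as $K^\ast$. Because $S$ is an elliptic positive $\Psi$DO of order $-1$ with principal symbol $1/(2|\xi|)$, the fractional powers $S^{\pm 1/2}$ are classical $\Psi$DOs of order $\mp 1/2$, so $\tilde K$ is a self-adjoint classical $\Psi$DO of order $-1$ whose principal symbol coincides with $\sigma_{-1}(K^\ast)$. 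I would then quote the Birman--Solomyak Weyl formula \cite{BiS} for a self-adjoint classical $\Psi$DO of order $-1$ on a closed $2$-manifold $M$ with principal symbol $|\xi|^{-1}b(x,\omega)$:
\[
N_\pm(\lambda) \;\sim\; \frac{\lambda^{-2}}{8\pi^2}\int_M\!\int_0^{2\pi}\!\bigl((\pm b)_+\bigr)^2\,d\theta\,dS(x),\qquad \lambda\downarrow 0,
\]
where $N_\pm(\lambda)$ denotes the number of eigenvalues with $\pm \lambda_j^\pm > \lambda$. Substituting $b = -\tfrac{1}{2}(k_1\cos^2\theta + k_2\sin^2\theta)$ and using $((-z/2)_+)^2 = z_-^2/4$ and $((z/2)_+)^2 = z_+^2/4$ gives $N_\pm(\lambda) \sim A_\pm(\Gamma)\,\lambda^{-2}$ with $A_\pm$ exactly as in \eqref{APM}; inverting the counting relation yields \eqref{AsPM}. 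The degenerate case $A_\pm = 0$ corresponds to the symbol $b$ having fixed sign, in which case the same framework provides $N_\pm(\lambda) = o(\lambda^{-2})$ and hence $\lambda_j^\pm = o(j^{-1/2})$.

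The main technical hurdle I anticipate is the careful identification of the principal symbol, keeping precise track of signs, factors of $2\pi$, the sign convention for the principal curvatures, and the $\pi/2$ rotation between the spatial and cotangent frames; a test on the sphere (where $k_1 = k_2 = -1/R$ and one recovers $\lambda_j^+ \sim \tfrac{1}{2}j^{-1/2}$ in agreement with Poincar\'e) is a convenient sanity check. A secondary subtlety, already signalled by the authors' remark after Theorem~\ref{main} that \cite{BiS} requires ``a few modifications'', is verifying that the Birman--Solomyak machinery genuinely separates positive from negative spectra rather than merely reproducing the combined singular-value asymptotic of Theorem~\ref{main}; this sign separation is precisely what distinguishes $A_+$ from $A_-$ and is what makes the theorem sensitive to convex versus non-convex geometry.
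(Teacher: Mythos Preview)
The paper is a survey and does not itself prove this theorem; it merely attributes the result to \cite{Miyanishi-Rozenblum} and, in the discussion following the preceding Theorem~\ref{main}, offers the one-line hint that ``the NP operator on a smooth surface is a pseudo-differential operator of order $-1$ and the known formulas for the eigenvalue asymptotics of pseudo-differential operators in \cite{BiS} can be applied with a few modifications.'' Your proposal is precisely this route---local graph coordinates to extract the principal symbol, Plemelj symmetrization $S^{1/2}K^\ast S^{-1/2}$ to obtain a genuinely self-adjoint $\Psi$DO of order $-1$ with the same principal symbol, and then the sign-separated Birman--Solomyak Weyl law---and it is indeed the argument of the cited source; your symbol computation, the $\cos^2/\sin^2$ swap, the constant bookkeeping, and the sphere sanity check are all correct, and the two caveats you flag (sign conventions, and checking that \cite{BiS} delivers the sign-definite counting rather than only singular-value asymptotics) are exactly the ``few modifications'' the authors allude to.
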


Note that
$$
A_+(\Gamma)+A_-(\Gamma)= \frac{3W(\Gamma) - 2\pi \chi(\Gamma)}{32 \pi}.
$$
Theorem \ref{mainPM} shows that if $\Gamma=\partial \Omega$ is infinitely smooth, then there \emph{always} exist infinitely many positive eigenvalues of the NP operator. If there exists a point where at least one of the principal curvatures is positive so that $\Omega$ is not convex near that point, there exist infinitely many negative eigenvalues.

The following theorem is also obtained in \cite{Miyanishi-Rozenblum}. There, the uniform convexity means that the principal curvatures are everywhere negative, which implies that they are separated from zero due to the smoothness assumption.

\begin{theorem}\label{finitenegative}
Let $\Omega\subset{\mathbb{R}}^3$ be a bounded domain with smooth boundary $\partial\Omega$ which is uniformly convex at all points. Then the associated NP operator possesses at most finitely many negative eigenvalues.
\end{theorem}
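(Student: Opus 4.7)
The plan is to exploit the pseudo-differential character of $K^*$ on the smooth surface $\Gamma$, extract its principal symbol, use uniform convexity to establish pointwise positivity of that symbol, and conclude via the sharp Gårding inequality. On a $C^\infty$ compact surface in $\mathbb{R}^3$, $K^*$ is a classical pseudo-differential operator of order $-1$. Reading off from the Weyl-type constants in Theorem \ref{mainPM}, or equivalently computing the Fourier transform of $-\partial_{n_x}E(x-y)$ in geodesic normal coordinates using the second fundamental form expansion $(x-y)\cdot n_x = \tfrac12(k_1 y_1^2 + k_2 y_2^2) + O(|y|^3)$, one finds
\[
\sigma_{-1}(K^*)(x,\xi) = -\frac{k_1(x)\cos^2\theta + k_2(x)\sin^2\theta}{2|\xi|},
\]
where $\theta$ denotes the angle of $\xi/|\xi|$ in the principal frame of $T_x\Gamma$. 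Under uniform convexity (outward normal convention), $k_1, k_2 \le -c_0 < 0$, hence $\sigma_{-1}(K^*)(x,\xi) \ge c_0/(2|\xi|) > 0$ throughout $T^*\Gamma \setminus 0$.

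To work in an $L^2$-self-adjoint framework, I invoke Plemelj's symmetrization $SK^* = KS$: the operator $SK^*$ is $L^2$-symmetric, and its quadratic form $Q(u) := \langle SK^*u, u\rangle_{L^2(\Gamma)}$ represents the Plemelj energy of $K^*$. Because the eigenfunctions of $K^*$ belong to $W^{1/2}(\Gamma) \subset L^2(\Gamma)$ by the two-norms regularity recalled in Section \ref{2-norms}, the number of negative eigenvalues of $K^*$ equals the negative Morse index of $Q$ restricted to $L^2(\Gamma)$. Now $SK^*$ is a self-adjoint classical $\Psi$DO of order $-2$, with principal symbol
\[
\sigma_{-2}(SK^*)(x,\xi) = \sigma_{-1}(S)\,\sigma_{-1}(K^*) = -\frac{k_1\cos^2\theta + k_2\sin^2\theta}{4|\xi|^2} \ge \frac{c_0}{4|\xi|^2},
\]
using that the single-layer operator has principal symbol $\sigma_{-1}(S) = 1/(2|\xi|)$ in $\mathbb{R}^3$.

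Conjugating by $\Lambda := (I - \Delta_\Gamma)^{1/2}$, a positive elliptic $\Psi$DO of order $1$, produces the self-adjoint $\Psi$DO $B := \Lambda\,SK^*\,\Lambda$ of order $0$ on $L^2(\Gamma)$, whose principal symbol
\[
\sigma_0(B)(x,\xi) = -\tfrac14\bigl(k_1\cos^2\theta + k_2\sin^2\theta\bigr) \ge c_0/4 > 0
\]
is strictly positive on the cosphere bundle $S^*\Gamma$. The sharp Gårding inequality applied to $B - \tfrac{c_0}{8} I$, which retains a non-negative principal symbol, yields a constant $C>0$ with
\[
\langle Bw, w\rangle_{L^2(\Gamma)} \ge \tfrac{c_0}{8}\|w\|^2_{L^2(\Gamma)} - C\|w\|^2_{W^{-1/2}(\Gamma)}, \qquad w \in L^2(\Gamma).
\]
Any subspace $W \subset L^2(\Gamma)$ on which $\langle Bw,w\rangle \le 0$ therefore satisfies $\tfrac{c_0}{8}\|w\|^2_{L^2} \le C\|w\|^2_{W^{-1/2}}$, and the Rellich compactness $L^2(\Gamma) \hookrightarrow W^{-1/2}(\Gamma)$ forces $\dim W < \infty$; hence $B$ admits at most finitely many non-positive eigenvalues.

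The concluding step is the transfer identity $Q(u) = \langle B\,\Lambda^{-1}u, \Lambda^{-1}u\rangle_{L^2(\Gamma)}$, valid for every $u \in L^2(\Gamma)$ by self-adjointness of $\Lambda$. Hence any subspace $U \subset L^2(\Gamma)$ on which $Q < 0$ is carried by $\Lambda^{-1}$ bijectively onto a subspace of $W^{1}(\Gamma) \subset L^2(\Gamma)$ on which $\langle B\cdot,\cdot\rangle$ is negative, and is therefore finite-dimensional by the preceding paragraph. This bounds the negative Morse index of $Q$, and thereby the number of negative NP eigenvalues, completing the proof. The chief technical obstacle in implementing this outline is the explicit identification of the principal symbol of $K^*$ in dimension three; the remaining manipulations are routine symbol calculus combined with the sharp Gårding inequality on a compact manifold.
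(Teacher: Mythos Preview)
The survey paper does not itself prove Theorem~\ref{finitenegative}; it merely quotes the result from \cite{Miyanishi-Rozenblum} after recording the Weyl asymptotics of Theorem~\ref{mainPM}. So there is no in-text proof to compare against. That said, your argument is correct in substance and is precisely the mechanism behind the cited result: on a $C^\infty$ surface the NP operator is a classical $\Psi$DO of order $-1$, its principal symbol is the expression in $k_1,k_2$ you write down, uniform convexity (in the paper's convention $k_1,k_2<0$) forces that symbol to be strictly positive on $S^*\Gamma$, and therefore a symmetrized realization is a positive operator plus a compact remainder, which can carry only finitely many negative eigenvalues. Your use of Plemelj symmetrization to pass to the genuinely self-adjoint form $SK^*$ and the inertia transfer to $B=\Lambda\,SK^*\,\Lambda$ are clean and correct; the counting identity between negative NP eigenvalues and the negative index of $Q(u)=\langle SK^*u,u\rangle$ on $L^2(\Gamma)$ is justified exactly as you indicate, since the eigenfunctions are smooth.

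Two small remarks. First, the sharp G\aa rding step is more than you need: since $\sigma_0(B)\ge c_0/4>0$ is \emph{strictly} positive, $B-\tfrac{c_0}{4}I$ differs from a nonnegative operator by an operator of order $-1$, hence compact; equivalently, the essential spectrum of the zeroth-order self-adjoint $\Psi$DO $B$ coincides with the range of $\sigma_0(B)\subset[c_0/4,\infty)$, so there are at most finitely many eigenvalues of $B$ in $(-\infty,0]$. This bypasses the $W^{-1/2}$ remainder estimate and the Rellich argument entirely. Second, an equivalent and slightly more streamlined symmetrization is to conjugate once: $\widetilde K:=S^{1/2}K^*S^{-1/2}$ is self-adjoint on $L^2(\Gamma)$, is a $\Psi$DO of order $-1$ with the same (positive) principal symbol as $K^*$, and has exactly the NP eigenvalues; then $\Lambda^{1/2}\widetilde K\Lambda^{1/2}$ is the order-$0$ operator to which the essential-spectrum observation applies directly. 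Either route reproduces what \cite{Miyanishi-Rozenblum} does.
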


We emphasize that
Theorem \ref{mainPM} and Theorem \ref{finitenegative} have been proved only for $C^{\infty}$ surfaces, while Theorem \ref{main} is proved for $C^{2, \alpha}$ surfaces. We conjecture that the results presented in this section hold true for $C^{1, 1}$ surfaces \cite{Miyanishi-Rozenblum}.

\section{Elastic NP operator}

Let $\Omega$ be a bounded domain in $\mathbb{R}^d$ whose boundary $\Gamma=\partial\Omega$ is Lipschitz continuous. Let $(\lambda, \mu)$ be the Lam\'e constants for $\Omega$ satisfying the strong convexity condition: $\mu > 0$ and $d\lambda + 2\mu > 0$. The isotropic elasticity tensor $\mathbb{C} = ( C_{ijkl} )_{i, j, k, l = 1}^3$ and the corresponding Lam\'e system $\mathcal{L}_{\lambda,\mu}$ are defined by
\begin{equation}
C_{ijkl} := \lambda \, \delta_{ij} \delta_{kl} + \mu \, ( \delta_{ik} \delta_{jl} + \delta_{il} \delta_{jk} )
\end{equation}
and
\begin{equation}
\mathcal{L}_{\lambda,\mu} u := \nabla \cdot \mathbb{C} \widehat\nabla u = \mu \Delta u + (\lambda + \mu) \nabla \nabla \cdot u,
\end{equation}
where $\widehat\nabla$ denotes the symmetric gradient, namely,
\begin{equation}
\widehat\nabla u := \frac{1}{2} \left( \nabla u + \nabla u^T \right) \quad (T \mbox{ for transpose}). \nonumber
\end{equation}
The corresponding conormal derivative on $\partial \Omega$ is defined to be
\begin{equation}\label{conormal}
\partial_\nu u := (\mathbb{C} \widehat\nabla u) n = \lambda (\nabla \cdot u) n + 2\mu (\widehat\nabla u) n \quad \mbox{on } \partial \Omega,
\end{equation}
where $n$ is the outward unit normal to $\partial \Omega$.

Let $\mathbf{G} = \left( G_{ij} \right)_{i, j = 1}^d$ is the Kelvin matrix of fundamental solutions to the Lam\'{e} operator $\mathcal{L}_{\lambda, \mu}$, namely,
\begin{equation}\label{Kelvin}
  G_{ij}(\mathbf{x}) =
  \begin{cases}
    - \displaystyle \frac{\alpha_1}{4 \pi} \frac{\delta_{ij}}{|\mathbf{x}|} - \frac{\alpha_2}{4 \pi} \displaystyle \frac{x_i x_j}{|\mathbf{x}|^3}, & \text{ if } d = 3, \\
    \noalign{\smallskip}
    \displaystyle \frac{\alpha_1}{2 \pi} \delta_{ij} \ln{|\mathbf{x}|} - \frac{\alpha_2}{2 \pi} \displaystyle \frac{x_i x_j}{|\mathbf{x}| ^2}, & \text{ if } d = 2,
  \end{cases}
\end{equation}
where
\begin{equation}
  \alpha_1 = \frac{1}{2} \left( \frac{1}{\mu} + \frac{1}{2 \mu + \lambda} \right) \quad\mbox{and}\quad \alpha_2 = \frac{1}{2} \left( \frac{1}{\mu} - \frac{1}{2 \mu + \lambda} \right).
\end{equation}
The NP operator for the Lam\'e system is defined by
\begin{equation}\label{BK}
\mathbf{K} [\mathbf{f}] (\mathbf{x}) := \mbox{p.v.} 2\int_{\partial \Omega} \partial_{\nu_\mathbf{y}} {\bf G} (\mathbf{x}-\mathbf{y}) \mathbf{f}(\mathbf{y}) d \sigma(\mathbf{y}) \quad \mbox{a.e. } \mathbf{x} \in \partial \Omega.
\end{equation}
Here, p.v. stands for the Cauchy principal value, and the conormal derivative $\partial_{\nu_\mathbf{y}}\mathbf{G} (\mathbf{x}-\mathbf{y})$ of the Kelvin matrix with respect to $\mathbf{y}$-variables is defined by
\begin{equation}\label{kerdef}
\partial_{\nu_\mathbf{y}}\mathbf{G} (\mathbf{x}-\mathbf{y}) \mathbf{b} = \partial_{\nu_\mathbf{y}} (\mathbf{G} (\mathbf{x}-\mathbf{y}) \mathbf{b})
\end{equation}
for any constant vector $\mathbf{b}$. Again the operator $\mathbf{K}$ is 2 times that in the papers cited in this section.

Unlike the electro-static case, the elastic NP operator is \emph{not} compact even on smooth domains, which is due to the off-diagonal entries of the Kelvin matrix $\mathbf{G}(x)$ (see \cite{DKV-Duke-88}), and the spectral properties of the elastic NP operator were not known. It turns out that the elastic NP operator is polynomially compact if $\Gamma$ is smooth, and has continuous spectrum if $\Gamma$ has a corner, as we describe in the sequel.

With the pair of Lam\'e parameters $\mu$ and $\lambda$, let
\begin{equation}\label{kzero}
k_0= \frac{\mu}{2\mu+\lambda},
\end{equation}
and define two polynomials $p_2$ and $p_3$ by
\begin{equation}
p_2(t)=t^2 - k_0^2, \quad p_3(t)=t(t^2-k_0^2).
\end{equation}
Then the following theorem holds:

\begin{theorem}\label{Polynomially compact}
Let $\Omega$ be a bounded domain in $\mathbb{R}^d$ with $C^{1,\alpha}$-smooth boundary for some $\alpha>0$.
\begin{itemize}
\item[{\rm (i)}] If $d=2$, then $p_2(\mathbf{K})$ is compact. Moreover, $\mathbf{K}-k_0$ and $\mathbf{K}+k_0$ are \emph{not} compact.
\item[{\rm (ii)}] If $d=3$, then $p_3(\mathbf{K})$ is compact. Moreover, $\mathbf{K} (\mathbf{K}-k_0 \mathbf{I})$, $\mathbf{K} (\mathbf{K} + k_0 \mathbf{I})$ and $\mathbf{K}^2 - k_0^2 \mathbf{I}$ are \emph{not} compact.
\end{itemize}
\end{theorem}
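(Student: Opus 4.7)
The plan is to identify $\mathbf{K}$ as a zeroth-order matrix-valued pseudo-differential operator on $\Gamma$ and to compute its principal symbol explicitly; the polynomial identities claimed in the theorem then become algebraic statements about the eigenvalues of a $d\times d$ symbol matrix, modulo operators of negative order (hence compact on a compact manifold).

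First I would decompose the Kelvin matrix as $\mathbf{G}=\alpha_1\mathbf{G}^{(\mathrm{I})}+\alpha_2\mathbf{G}^{(\mathrm{II})}$, where $\mathbf{G}^{(\mathrm{I})}_{ij}=\delta_{ij}E(x,y)$ is a scalar multiple of the Newtonian kernel times the identity and $\mathbf{G}^{(\mathrm{II})}_{ij}$ contains the tensor $(x-y)_i(x-y)_j/|x-y|^d$. The conormal derivative $\partial_{\nu_y}\mathbf{G}^{(\mathrm{I})}$ has entries that are either scalar Neumann--Poincar\'e-type kernels (already compact on $C^{1,\alpha}$ boundaries by the estimates used in Section~2) or purely tangential smoothing pieces. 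The genuinely non-compact contribution therefore comes from $\mathbf{K}^{(\mathrm{II})}$, the singular integral operator produced by $\partial_{\nu_y}\mathbf{G}^{(\mathrm{II})}$.

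Next, fix $x_0\in\Gamma$ and flatten $\Gamma$ near $x_0$ by writing it as a graph over its tangent hyperplane. Freezing the normal and the metric at $x_0$ produces a controlled error of lower order, so that $\mathbf{K}$ becomes, modulo compacts, a translation-invariant matrix-valued Calder\'on--Zygmund operator on $\mathbb{R}^{d-1}$ whose kernel is a finite combination of second-order Riesz-type kernels $\mathrm{p.v.}\,x_ix_j/|x|^{d+1}$ with coefficients built from the Lam\'e constants and $n(x_0)$. A direct Fourier-side computation then identifies the principal symbol $\sigma_0(\mathbf{K})(x_0,\xi)$ as an explicit $d\times d$ matrix depending on $\xi/|\xi|$ and $n(x_0)$. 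The crucial piece of linear algebra is then to verify that, for every $x_0$ and $\xi\neq 0$, the eigenvalues of $\sigma_0(\mathbf{K})(x_0,\xi)$ are exactly $\{+k_0,-k_0\}$ when $d=2$ and $\{0,+k_0,-k_0\}$ when $d=3$, where $k_0=\mu/(2\mu+\lambda)$. Once this is in place, $p_2(\sigma_0(\mathbf{K}))\equiv 0$ in dimension two and $p_3(\sigma_0(\mathbf{K}))\equiv 0$ in dimension three; thus $p_2(\mathbf{K})$ (resp.\ $p_3(\mathbf{K})$) is a pseudo-differential operator of strictly negative order on the compact boundary $\Gamma$, hence compact on $L^2(\Gamma)$, giving the compactness statements in (i) and (ii).

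For the non-compactness claims I would argue at the symbol level: reading off from the eigenvalue lists above, the symbols of $\mathbf{K}\pm k_0\mathbf{I}$ in $d=2$ and of $\mathbf{K}(\mathbf{K}\pm k_0\mathbf{I})$ and $\mathbf{K}^2-k_0^2\mathbf{I}$ in $d=3$ all fail to vanish at every point (for example, in 3D the symbol of $\mathbf{K}^2-k_0^2\mathbf{I}$ has eigenvalue $-k_0^2$ on the kernel of $\sigma_0(\mathbf{K})$). Since a zeroth-order pseudo-differential operator on a compact manifold is compact only if its principal symbol vanishes identically, none of these combinations is compact; a concrete Weyl singular sequence, built by truncating and modulating a local eigenvector of the frozen symbol at frequency $\xi_n\to\infty$, makes the obstruction quantitative for readers who prefer a self-contained argument. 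The main obstacle is the explicit Fourier calculation in the second step: keeping careful track of all tensor indices when transforming the distributions $\mathrm{p.v.}\,x_ix_j/|x|^{d+1}$ on $\mathbb{R}^{d-1}$ and then applying the conormal-derivative factor to get the right $d\times d$ symbol matrix is delicate, but entirely algebraic once the reduction to flat model operators has been carried out.
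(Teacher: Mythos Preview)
Your approach is essentially the one carried out in \cite{AKM19} for the three-dimensional $C^\infty$ case: identify the principal symbol of $\mathbf{K}$ and verify the polynomial identity at the symbol level, so that $p_d(\mathbf{K})$ has vanishing principal symbol and is therefore compact. The paper is a survey and does not reproduce the argument; it only records that three separate routes were taken in the literature for the three sub-cases. Two points of comparison are worth making.

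\textbf{Regularity.} The symbolic calculus you invoke is unproblematic on a $C^\infty$ boundary, but the theorem is stated for $C^{1,\alpha}$. Your freezing step (``controlled error of lower order'') is the right intuition, and the first-order error kernels one obtains are indeed weakly singular and hence compact; the delicate point is the \emph{composition}. To conclude that $p_d(\mathbf{K})$ has negative order you implicitly use that the principal symbol of a product is the product of principal symbols, and on a merely $C^{1,\alpha}$ surface there is no off-the-shelf calculus giving this. The paper flags exactly this gap: the extension from $C^\infty$ to $C^{1,\alpha}$ in three dimensions is attributed to \cite{KK-arXiv}, where the argument is recast entirely in terms of surface Riesz transforms and their algebraic composition identities, so that no symbolic calculus is needed. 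Your outline would either have to restrict to smooth $\Gamma$ or supply that Riesz-transform algebra by hand.

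\textbf{Two dimensions.} In $d=2$ the paper cites a more elementary route \cite{AJKKY18}: the non-compact part of $\mathbf{K}$ is identified directly with a constant multiple of the Hilbert transform along $\Gamma$, and the relation (Hilbert transform)$^2=-I$ immediately gives $\mathbf{K}^2-k_0^2 I$ compact, with no tensor bookkeeping. Your symbol computation recovers the same conclusion but is heavier machinery than the problem requires in the planar case; on the other hand it has the virtue of treating $d=2$ and $d=3$ uniformly.
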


The two-dimensional result was proved in \cite{AJKKY18} using the Hilbert transform, and the three-dimensional result was proved for $C^\infty$-smooth domains in \cite{AKM19} using the pseudo-differential calculus, and extended to $C^{1,\alpha}$ domains in \cite{KK-arXiv} by computing the composition of the surface Riesz transforms.

As a consequence of Theorem \ref{Polynomially compact} and the spectral mapping theorem, the following theorem on the spectral structure of the elastic NP operator was obtained:

\begin{theorem}
Let $\Omega$ be a bounded domain in $\mathbb{R}^d$ with $C^{1,\alpha}$-smooth boundary for some $\alpha>0$.
\begin{itemize}
\item[{\rm (i)}] If $d=2$, $\sigma(\mathbf{K})$ consists of two non-empty sequences of eigenvalues  which converge to $k_0$ and $-k_0$, respectively.
\item[{\rm (ii)}] If $d=3$, $\sigma(\mathbf{K})$ consists of three non-empty sequences of eigenvalues  which converge to $0$, $k_0$ and $-k_0$, respectively.
\end{itemize}
\end{theorem}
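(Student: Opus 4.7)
The plan is to deduce the theorem from Theorem \ref{Polynomially compact} by combining the spectral mapping theorem in the Calkin algebra, Riesz--Schauder theory for the polynomially compact spectrum, and the $L^2$-symmetrizability of the elastic NP operator (the elastic analogue of Plemelj's principle, coming from the energy pairing induced by the elastic single-layer potential), which makes $\mathbf{K}$ similar to a self-adjoint operator on a suitable Hilbert space and in particular guarantees a real spectrum with no singular continuous or residual parts. The three non-compactness assertions of Theorem \ref{Polynomially compact} will be the precise ingredient that rules out empty clusters.

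I would first work in the Calkin algebra $\mathcal{A}=\mathcal{B}(L^2(\partial\Omega)^d)/\mathcal{K}$ with quotient map $\pi$. Compactness of $p_d(\mathbf{K})$ gives $p_d(\pi(\mathbf{K}))=0$; since $\pi(\mathbf{K})$ generates a commutative unital subalgebra, the spectral mapping theorem yields
\[ \sigma_{\mathrm{ess}}(\mathbf{K}) \subseteq p_d^{-1}(0) = \begin{cases} \{k_0,-k_0\}, & d=2, \\ \{0,k_0,-k_0\}, & d=3. \end{cases} \]
For $\lambda$ outside this finite set, write $p_d(t)-p_d(\lambda)=(t-\lambda)q_\lambda(t)$; then $p_d(\mathbf{K})-p_d(\lambda)I$ is a compact perturbation of the non-zero scalar $-p_d(\lambda)I$, hence invertible modulo compacts. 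Passing to $\mathcal{A}$ one gets $\pi(\mathbf{K}-\lambda)\pi(q_\lambda(\mathbf{K}))=-p_d(\lambda)\neq 0$; since the two factors commute, a standard argument in a commutative Banach algebra shows that each factor is invertible, so $\mathbf{K}-\lambda$ is Fredholm of index zero. Conversely, the non-compactness statements push each candidate point back into the essential spectrum by the same commuting-zero-divisor mechanism: in dimension two, from $\pi(\mathbf{K}-k_0)\pi(\mathbf{K}+k_0)=0$, invertibility of $\pi(\mathbf{K}+k_0)$ would yield $\pi(\mathbf{K}-k_0)=0$, i.e., $\mathbf{K}-k_0$ compact, contradicting Theorem \ref{Polynomially compact}. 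The analogous reasoning applied to the three linear factors of $p_3$ places all of $0,\pm k_0$ in $\sigma_{\mathrm{ess}}(\mathbf{K})$ when $d=3$.

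With $\sigma_{\mathrm{ess}}(\mathbf{K})$ pinned down exactly, the analytic Fredholm theorem on the components of $\mathbb{C}\setminus \sigma_{\mathrm{ess}}(\mathbf{K})$ shows that $\sigma(\mathbf{K})\setminus\sigma_{\mathrm{ess}}(\mathbf{K})$ consists of isolated eigenvalues of finite algebraic multiplicity that can cluster only at essential spectral points, and symmetrizability makes these eigenvalues real and discrete. The step I expect to be the main obstacle is the non-emptiness of each sequence, that is, excluding a scenario in which the discrete spectrum clusters only at a proper subset of $\sigma_{\mathrm{ess}}(\mathbf{K})$. Suppose for contradiction that no eigenvalue sequence clusters at $-k_0$; then a Riesz spectral projector on a small neighborhood of $-k_0$ would have finite-dimensional range, and decomposing $\mathbf{K}$ with respect to it would display $\mathbf{K}+k_0$ as a finite-rank operator on the range and a boundedly invertible one on the kernel, hence Fredholm, contradicting the non-compactness of $\mathbf{K}-k_0$ through the Calkin zero-divisor argument above. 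Iterating this at each root of $p_d$ turns the three non-compactness statements of Theorem \ref{Polynomially compact} into three genuine non-empty clusters; labeling each non-essential eigenvalue by its nearest essential point and enumerating monotonically then yields the claimed two (respectively three) sequences.
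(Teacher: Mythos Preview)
Your proof is correct and follows exactly the route the paper indicates: compute the essential spectrum of $\mathbf{K}$ from Theorem~\ref{Polynomially compact} via the spectral mapping theorem in the Calkin algebra (with the non-compactness clauses forcing each root of $p_d$ into $\sigma_{\mathrm{ess}}$), then use symmetrizability together with analytic Fredholm/Riesz--Schauder theory to conclude that the remainder of the spectrum consists of real eigenvalues of finite multiplicity accumulating only at those roots. The paper records this deduction in a single sentence, so your detailed implementation---including the Calkin zero-divisor argument and the non-emptiness step (which amounts to the standard fact that for a self-adjoint-like operator each essential spectral point is either an accumulation point of eigenvalues or itself an eigenvalue of infinite multiplicity)---is precisely what the paper leaves to the reader.
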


If $\Omega$ is a disk, then $\sigma(\mathbf{K})$ consists of $k_0$ and $-k_0$ \cite{AJKKY18}. The spectrum of the ball has been computed in \cite{DLL19}.

A natural follow-up question is about the convergence rate of eigenvalues to $0$. Regarding this question, the following theorems for smooth and real analytic boundaries in {\it two dimensions} have been proved in \cite{AKM-arXiv}.

\begin{theorem}
If $\Gamma$ is $C^{k, \alpha}$ with $k+\alpha > 2$ and $0\leq \alpha <1$, then eigenvalues $\lambda_j^{\pm}$ of the elastic NP operator $\mathbf{K}$ converging to $\pm k_0$ satisfy
\begin{equation}
\lambda_j^{\pm} = \pm k_0  + o(j^{d}) \quad \text{as}\; j\rightarrow \infty
\end{equation}
for any $d>-(k+\alpha)+3/2$.
\end{theorem}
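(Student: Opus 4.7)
My plan is to reduce the question of the rate of convergence of $\lambda_j^\pm$ to $\pm k_0$ to a quantitative Schatten-class estimate for the compact operator $\mathbf{K}^2 - k_0^2 \mathbf{I}$, whose compactness is granted by Theorem~\ref{Polynomially compact}(i). Indeed, if $\lambda_j^+ \to k_0$, then the eigenvalues of $\mathbf{K}^2 - k_0^2 \mathbf{I}$ corresponding to this branch are $(\lambda_j^+)^2 - k_0^2 = (\lambda_j^+ - k_0)(\lambda_j^+ + k_0)$, and since $\lambda_j^+ + k_0 \to 2 k_0 \neq 0$, any decay rate $\eta_j = o(j^{e})$ for the eigenvalues of $\mathbf{K}^2 - k_0^2\mathbf{I}$ transfers verbatim to $\lambda_j^+ - k_0$; an identical remark handles the branch converging to $-k_0$. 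Hence it suffices to prove
\[
 \mu_j\bigl(\mathbf{K}^2 - k_0^2 \mathbf{I}\bigr) = o(j^{d}) \qquad \text{for every } d > -(k+\alpha) + 3/2,
\]
where $\mu_j$ denotes the ordered eigenvalues in modulus.

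The first step is to derive a usable integral representation for $\mathbf{K}^2 - k_0^2 \mathbf{I}$ that reveals the regularity gain. Following the approach of \cite{AJKKY18}, I would write $\mathbf{K}$ in a matricial form whose principal part is a linear combination of the identity and the Hilbert transform on $\Gamma$, with coefficients $\pm k_0$; the remainder is already a smoothing integral operator with a kernel inheriting $C^{k-1,\alpha}$ regularity from $\Gamma$. Composing $\mathbf{K}$ with itself, the Hilbert-transform principal symbol squares to the identity, and the algebraic combination that defines $\mathbf{K}^2 - k_0^2 \mathbf{I}$ is designed so that the purely singular contributions cancel. What survives is an integral operator on $\Gamma$ whose kernel $\mathcal{N}(x,y)$ can be estimated by the boundary smoothness: a careful bookkeeping shows that $\mathcal{N}(x,y)$ belongs to $C^{k-2,\alpha}$ in a uniform sense along $\Gamma$, essentially because each composition factor contributes one order of regularity above what is needed to neutralize the Cauchy kernel.

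With the kernel smoothness in hand, I would invoke the Delgado--Ruzhansky criterion \cite{DR} in exactly the same spirit as Theorem~\ref{theorem_MS}: an integral operator on a closed $C^k$ curve whose kernel lies in $C^{k-2,\alpha}$ (suitably interpreted on $\Gamma \times \Gamma$) belongs to the Schatten--von Neumann class $\mathcal{C}^{r}(L^2(\Gamma))$ for every $r > \tfrac{2}{2(k+\alpha)-3-\epsilon}$. Membership in $\mathcal{C}^r$ implies, by the standard inequality $\sum_j \mu_j^r < \infty$, that $\mu_j(\mathbf{K}^2 - k_0^2\mathbf{I}) = o(j^{-1/r})$, which gives the desired bound $o(j^{d})$ for every $d > -(k+\alpha)+3/2$. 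Combining with the reduction in the first paragraph yields the statement.

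The main obstacle is the second step: proving that the algebraic cancellation in $\mathbf{K}^2 - k_0^2\mathbf{I}$ is strong enough to produce a kernel whose regularity tracks the boundary regularity losslessly through the composition. One must control not only the principal symbol cancellation but also the subprincipal terms produced by the commutators between the Hilbert transform and the smooth coefficients stemming from the parametrization of $\Gamma$; these commutators are the source of the sharp exponent $k+\alpha-3/2$. I would expect this to require an explicit expansion of $\partial_{\nu_y}\mathbf{G}(x-y)$ along the tangent/normal frame of $\Gamma$, isolating the $\pm k_0$ contributions and bounding every other piece in $C^{k-1,\alpha}(\Gamma\times\Gamma)$, followed by an iterated commutator estimate to transfer these bounds through the composition.
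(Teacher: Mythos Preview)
The survey does not give a proof of this theorem; it only cites \cite{AKM-arXiv}. Your outline is essentially the strategy followed there: one exploits the decomposition of $\mathbf{K}$ into $k_0$ times a matrix Hilbert-transform block (squaring to the identity) plus a weakly singular remainder, so that $p_2(\mathbf{K})=\mathbf{K}^2-k_0^2\mathbf{I}$ is an integral operator whose kernel inherits essentially the same regularity as the scalar NP kernel on a $C^{k,\alpha}$ curve, and then transfers the Schatten/eigenvalue decay estimates of Section~\ref{sec:decay}.

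Two points in your write-up should be tightened. First, the reduction in your opening paragraph uses that the eigenvalues of $\mathbf{K}^2-k_0^2\mathbf{I}$ are exactly $\{\lambda_j^2-k_0^2\}$ with the same multiplicities; this requires $\mathbf{K}$ to be diagonalizable, which follows because the elastic NP operator is symmetrizable on $W^{-1/2}(\Gamma)^2$ via the elastic single-layer potential (the Lam\'e analogue of Plemelj's identity)---you should state this. Second, the Delgado--Ruzhansky criterion of \cite{DR} that you invoke is formulated for integer smoothness on compact manifolds; to capture the H\"older exponent $\alpha$ in the rate $-(k+\alpha)+3/2$ one needs either an interpolation step or, as in \cite{AKM-arXiv} and \cite{AKM}, a direct Weyl--Courant min-max argument against finite-rank approximants that handles $C^{k,\alpha}$ kernels. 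Apart from these adjustments your plan coincides with the referenced proof.
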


To describe the result for domains with real analytic boundaries, we need the following notion in addition to the notion of the modified maximal Grauert radius defined before. Let $\Phi:U \to \Omega$ be a Riemann mapping and let $q$ be the parametrization of $\Gamma$ by $\Phi$, namely, $q(s)=\Phi(e^{is})$. Such a parametrization is called a parametrization of $\Gamma$ by a Riemann mapping.

\begin{theorem}
Suppose that $\Gamma$ is real analytic. Let $q$ be a parametrization of $\Gamma$ by a Riemann mapping and let $\epsilon_q$ be its modified maximal Grauert radius. Then, eigenvalues $\lambda_j^{\pm}$ of the elastic NP operator $\mathbf{K}$ converging to $\pm k_0$ satisfy
\begin{equation}
\lambda^{\pm}_j = \pm k_0+ o(e^{-\epsilon j}) \quad \text{as}\; j\rightarrow \infty
\end{equation}
for any $\epsilon <\epsilon_q/8$.
\end{theorem}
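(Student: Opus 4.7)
The plan is to reduce the statement to an eigenvalue decay estimate for the compact operator
\begin{equation*}
p_2(\mathbf{K}) \;=\; \mathbf{K}^2 - k_0^2 \mathbf{I},
\end{equation*}
which is compact on $L^2(\Gamma)^2$ by Theorem \ref{Polynomially compact}(i). By the spectral mapping theorem, the non-zero eigenvalues of $p_2(\mathbf{K})$ are precisely $(\lambda_j^\pm)^2-k_0^2$, and the factorization
\begin{equation*}
(\lambda_j^\pm)^2-k_0^2 \;=\; (\lambda_j^\pm\mp k_0)(\lambda_j^\pm\pm k_0),
\end{equation*}
together with the fact that $\lambda_j^\pm\pm k_0\to\pm 2k_0\neq 0$, shows that $|\lambda_j^\pm\mp k_0|$ and the corresponding eigenvalue of $p_2(\mathbf{K})$ decay at the same rate up to a multiplicative constant. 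Thus it suffices to prove that the eigenvalues $\mu_n$ of $p_2(\mathbf{K})$ satisfy $|\mu_n|=o(e^{-\epsilon n})$ for every $\epsilon<\epsilon_q/8$.

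Second, I would represent $p_2(\mathbf{K})$ as an integral operator whose kernel extends holomorphically to a tubular neighborhood of $\Gamma$. Here the Hilbert-transform reduction of the two-dimensional elastic NP operator established in \cite{AJKKY18} is crucial: $\mathbf{K}$ differs by a compact operator from a constant multiple of a Hilbert-transform-type projection, so $p_2(\mathbf{K})$ can be written, after straightforward manipulations that preserve analyticity of the kernel, as an explicit $2\times 2$-matrix integral operator. Pulling back by the Riemann parametrization $q(s)=\Phi(e^{is})$ yields a kernel on $S^1\times S^1$ whose entries extend to $A_\epsilon\times A_\epsilon$ for every $\epsilon<\epsilon_q$; this is exactly the reason why the exponent has to be defined via the Grauert radius of a \emph{Riemann} parametrization rather than an arbitrary one, since the conformal factor is what conjugates the Hilbert transform into a diagonal Fourier multiplier.

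Third, once the kernel is analytic in $A_\epsilon\times A_\epsilon$, a Paley-Wiener type lemma gives Fourier coefficients decaying as $Ce^{-\epsilon(|m|+|n|)}$ in the basis $\{e^{ims}e^{int}\}$. Truncating these expansions at $|m|,|n|\le N$ produces a rank-$O(N)$ approximation of $p_2(\mathbf{K})$ with operator-norm error $O(e^{-\epsilon N})$. The Weyl-Courant min-max principle, used exactly as in the scalar case of Theorem \ref{theorem_AKM} and \cite{AKM}, then bounds the $n$-th singular value of $p_2(\mathbf{K})$ by $Ce^{-\epsilon' n}$ for $\epsilon'$ slightly smaller than $\epsilon$. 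Transferring back to $\mathbf{K}$ via the factorization above closes the argument.

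The main obstacle is a careful bookkeeping of the exponent: the factor $1/8$ collects four successive losses of $2$, coming from (i) the vector dimension of the elastic system on $L^2(\Gamma)^2$, (ii) the quadratic polynomial $p_2$ doubling the effective spectral density, (iii) the pairing of the two distinct accumulation points $\pm k_0$ into a single sequence of $\mu_n$, and (iv) the conversion between truncation rank and singular-value index in the Weyl-Courant estimate. The delicate point is to arrange the approximation step so that the analytic extension is exploited on the full annulus $A_\epsilon$ at each stage rather than being shrunk at each reduction; apart from that, the argument is a systematic matrix-valued adaptation of the scalar exponential-decay proof of \cite{AKM}.
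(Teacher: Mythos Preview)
The paper itself does not prove this theorem; it is a survey and simply cites \cite{AKM-arXiv} for the result. So there is no in-paper proof to compare against, and your outline is effectively a sketch of what that external reference presumably does. The overall architecture you propose---pass to the compact operator $p_2(\mathbf{K})$, show its kernel extends analytically after the Riemann pullback, and run the Paley--Wiener plus Weyl--Courant argument from \cite{AKM}---is indeed the natural matrix-valued adaptation of the scalar proof of Theorem~\ref{theorem_AKM}, and the observation that the Riemann parametrization is forced because it is the one that conjugates the Cauchy singular integral to the standard circle Hilbert transform is the right explanation for why $\epsilon_q$ rather than $\epsilon_\Gamma$ appears.

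Where your sketch has a genuine gap is the bookkeeping of the constant $1/8$. Your items (ii) and (iii) do not actually cost factors of $2$. For (ii), the map $\lambda\mapsto\lambda^2-k_0^2$ is a local diffeomorphism near each of $\pm k_0$, so it relabels eigenvalues bijectively rather than ``doubling the density.'' For (iii), interleaving the two subsequences into the single sequence $\{\mu_n\}$ works in your \emph{favor}: the $j$-th element of either subsequence sits at position $\ge j$ in the merged list, so $|\lambda_j^\pm\mp k_0|\lesssim|\mu_j|$ with no loss. That leaves only the legitimate factor of $2$ from the vector dimension in (i) and the factor of $2$ from the rank--index conversion in (iv), which together account for $\epsilon_q/4$, not $\epsilon_q/8$. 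The missing factor must therefore come from somewhere you have swept under the phrase ``straightforward manipulations that preserve analyticity of the kernel'': when you expand $p_2(\mathbf{K})=\mathbf{K}_0\mathbf{C}+\mathbf{C}\mathbf{K}_0+\mathbf{C}^2$ with $\mathbf{K}_0$ the Hilbert-transform principal part and $\mathbf{C}$ the compact remainder, the resulting kernel need not extend to the full annulus $A_{\epsilon_q}$. Tracking the actual domain of analytic continuation of this composite kernel---not the four abstract halvings you list---is where the honest work lies, and your sketch does not do it.
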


When two-dimensional domain $\Omega$ has a corner on its boundary, then it is proved in the forthcoming paper \cite{BDKK} that there are intervals of continuous spectra around $-k_0$ and $k_0$.

\section{Spectral analysis in a space with two norms}\label{2-norms}

On the abstract spectral analysis side, it was Mark G. Krein who found in 1947 (and probably much earlier) a theoretical explanation for the spectral behavior of ``symmetrisable compact linear transformations", a term already used
in potential theory by a good dozen of authors. For an authoritative account of these early efforts see the chapter with the same title in Zaanen's book \cite{Za}. Krein's arguments are simple, irreducible in
their beauty and applicability, and have been rediscovered by P. Lax, J. Dieudonn\'{e}, and possibly others, see for references \cite{KPS-ARMA-07,Lax,Reid}.

In a couple of recent articles \cite{KP,Pu} we have adapted Krein's technique to a wider class of non-compact linear operators. In particular we have proved the spectral permanence in the case of the Lam\'e system of equations, between Lebesgue space and the weaker energy space of Sobolev fractional order.

Second, we have outlined in the same articles \cite{KP,Pu} a general scheme of adapting the general Galerkin approximation to the abstract setting of Neumann-Poincar\'e operator spectral analysis.  Specifically, we studied the simultaneous finite rank approximation of an operator acting on a Hilbert space endowed with a weaker norm.  A detailed analysis of the norm gap between the two sequences of finite central truncations of a linear bounded operator with respect to an ascending nest of finite dimensional subspaces isolates some specific choices of finite rank approximations with the same asymptotics with respect to the two norms. For general references on finite central truncations, known also under the name of the moment method, see \cite{B,LS}.

We reproduce below, for the enjoyment of the reader, Krein's main spectral permanence observations.
 Let $H$ be a complex separable Hilbert space and $T \in {\mathcal L}(H)$
a linear bounded operator acting on $H$. The spectrum of $T$ is denoted $\sigma(T)$.

We endow $H$ with a weaker pre-hilbertian space norm:
$$ (x,y) = \langle Ax, y \rangle, \ \ \ \ x, y \in H,$$
where $A >0$ is a positive, non-invertible bounded linear operator acting on $H$. If $A$ is invertible, then the two norms are equivalent, a much simpler scenario.
Let $K$ denote
the Hilbert space completion of $H$ with respect to the new norm. We have $H \subset K$, with dense range inclusion.

It is known since Krein's landmark article \cite{Krein} that an operator $T \in {\mathcal L}(H)$ which is symmetric with respect to the weaker norm
is automatically bounded with respect to it. Moreover, he proved in the same work that the additional compactness assumption on $T \in {\mathcal L}(H)$
implies the compactness and spectral permanence of $T$, with respect to the weaker norm.

\begin{proposition} \label{continuity} Assume that two linear bounded operators $S,T \in {\mathcal L}(H)$ satisfy
$$ AT = SA.$$
Then $T$ extends to a bounded linear transform of the Hilbert space $K$, that is
there exists a linear bounded map $M \in {\mathcal L}(H)$ satisfying
$$ \sqrt{A} T = M \sqrt{A}.$$
\end{proposition}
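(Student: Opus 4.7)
The plan is to construct $M$ by first defining it on $\mathrm{Ran}(\sqrt A)$, verifying boundedness there, and then extending. Since $A\geq 0$, one has $\ker\sqrt A=\ker A$: the inclusion $\ker A\subseteq \ker\sqrt A$ follows from $A=\sqrt A\cdot\sqrt A$, and the reverse from $\|\sqrt A x\|^{2}=\langle Ax,x\rangle$. Consequently, if $\sqrt A x=0$ then $Ax=0$, so $ATx=SAx=0$, hence $Tx\in\ker A=\ker\sqrt A$, and $\sqrt A Tx=0$. Thus $M_{0}(\sqrt A\,x):=\sqrt A\,Tx$ is a well-defined linear map on $\mathrm{Ran}(\sqrt A)$. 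Everything then reduces to proving the a priori estimate
\[
  \|\sqrt A\,Tx\|\,\leq\,C\,\|\sqrt A\,x\|,\qquad x\in H,
\]
after which $M_{0}$ extends by continuity to $\overline{\mathrm{Ran}(\sqrt A)}=(\ker\sqrt A)^{\perp}$ and then by zero on $\ker\sqrt A$, yielding $M\in\mathcal L(H)$ with $\sqrt A\,T=M\sqrt A$ and $\|M\|\leq C$.

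The core trick is symmetrization. Set $Q:=S^{\ast}T$. Taking adjoints in the hypothesis $AT=SA$ gives $T^{\ast}A=AS^{\ast}$, and combining the two intertwiners,
\[
  AQ=AS^{\ast}T=T^{\ast}AT=T^{\ast}SA=Q^{\ast}A,
\]
so $Q$ is $A$-symmetric in Krein's sense. This reduces the estimate to a single invocation of Krein's lemma, since
\[
  \|\sqrt A\,Tx\|^{2}=\langle ATx,Tx\rangle=\langle Ax,S^{\ast}Tx\rangle=\langle Ax,Qx\rangle=\langle\sqrt A\,x,\sqrt A\,Qx\rangle\leq\|\sqrt A\,x\|\,\|\sqrt A\,Qx\|,
\]
so one needs only the classical bound $\|\sqrt A\,Qy\|\leq\|Q\|\,\|\sqrt A\,y\|$ valid whenever $AQ=Q^{\ast}A$. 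Combining the two displays will produce $\|M\|\leq\|Q\|^{1/2}\leq(\|S\|\,\|T\|)^{1/2}$.

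For the Krein bound itself, induction from $AQ=Q^{\ast}A$ gives $AQ^{n}=Q^{\ast n}A$, whence
\[
  \|\sqrt A\,Q^{n}y\|^{2}=\langle AQ^{n}y,Q^{n}y\rangle=\langle Ay,Q^{2n}y\rangle\leq\|\sqrt A\,y\|\,\|\sqrt A\,Q^{2n}y\|.
\]
Iterating along the dyadic sequence $n=1,2,4,\dots$ produces $\|\sqrt A\,Qy\|\leq\|\sqrt A\,y\|^{1-2^{-k}}\|\sqrt A\,Q^{2^{k}}y\|^{2^{-k}}$, and the estimate $\|Q^{2^{k}}\|^{2^{-k}}\leq\|Q\|$ together with $k\to\infty$ delivers the claim. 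I expect the main conceptual step to be the recognition that $Q=S^{\ast}T$ is $A$-symmetric, since this is the precise bridge from the general hypothesis $AT=SA$ to Krein's original framework; the subsequent dyadic passage to the limit is routine, though one should separately check the (trivial) case $\sqrt A\,y=0$ so the geometric bootstrap is not vacated.
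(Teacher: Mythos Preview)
Your argument is correct, and it is a genuine variant of the paper's proof. The paper proceeds by an additive decomposition: it sets $X=T+S^{\ast}$ and $Y=i(T-S^{\ast})$, checks that each satisfies $AX=X^{\ast}A$, $AY=Y^{\ast}A$, applies Krein's telescoping bound separately to $X$ and $Y$, and then recovers $T=(X-iY)/2$. You instead observe that the \emph{single} operator $Q=S^{\ast}T$ is $A$-symmetric, apply Krein's dyadic iteration once to $Q$, and bootstrap back to $T$ via the Cauchy--Schwarz step $\|\sqrt{A}\,Tx\|^{2}=\langle Ax,Qx\rangle\leq\|\sqrt{A}\,x\|\,\|\sqrt{A}\,Qx\|$. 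Both routes rest on the same iteration lemma; yours is slightly more economical (one invocation of Krein rather than two) and delivers the sharper explicit constant $\|M\|\leq(\|S\|\,\|T\|)^{1/2}$, whereas the paper's splitting naturally produces a bound of the order $\|T\|+\|S^{\ast}\|$. The paper's decomposition, on the other hand, makes the self-adjoint intertwiner $M=M^{\ast}$ in the symmetric case $S=T^{\ast}$ more transparent.
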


\begin{proof} Since $T^\ast A = A S^\ast$, the operators $X = T+S^\ast, Y = i(T-S^\ast)$ satisfy
$$ AX = X^\ast A, \ AY=Y^\ast A.$$ Then, according to Krein's observation \cite{Krein},  both $X$ and $Y$ extend continuously to the Hilbert space $K$.
We repeat the very simple proof. For every vector $x \in H$:
$$ \langle AXx, Xx\rangle^2  = \langle Ax, X^\ast x \rangle^2 \leq \langle Ax,x\rangle \langle A X^\ast x, X^\ast x\rangle.$$
Consequently, leaving aside the trivial case of vanishing denominators:
$$ \frac{(Xx,Xx)}{(x,x)} \leq \frac{(X^2 x, X^2x)}{(Xx,Xx)} \leq \ldots \leq \frac{(X^n x, X^n x)}{(X^{n-1}x, X^{n-1}x)}.$$
The product of all factors is telescopic, and yields
$$ [ \frac{(Xx,Xx)}{(x,x)} ]^n \leq \frac{(X^n x, X^n x)}{(x,x)} \leq \frac{\|A\| \| X \|^{2n}}{(x,x)},$$
hence
$$  \frac{(Xx,Xx)}{(x,x)} \leq \liminf_n [\frac{\|A\| \| X \|^{2n}}{(x,x)}]^{1/n} = \|X\|^2.$$
Thus $X$ extends to a linear bounded operator of the space $K$, with norm not exceeding the norm on $H$.
Then $T = \frac{X-iY}{2}$ turns to be bounded on $K$, that is
$$ \langle A Tx, Tx \rangle \leq C \langle Ax, x \rangle, \ \  x \in H,$$
for a universal positive constant $C$. Or equivalently,
$$ T^\ast A T \leq C A$$
in the operator norm. That is there exists a linear bounded map $M_1 \in {\mathcal L}(H)$ satisfying
$$ \sqrt{A}T = M_1 \sqrt{C} \sqrt{A},$$
and we can absorb the constant into the intertwiner: $M = M_1 \sqrt{C}.$
\end{proof}

Note that $T$ is symmetric with respect to the inner space $K$ if and only if $M = M^\ast$ as an operator of $H$.

\begin{corollary} The following spectral inclusions hold:
$$ \sigma_p(T, H) \subset \sigma_p(M, H),$$
$$ \sigma_{ap}(T, K) = \sigma_{ap} (M, H).$$
\end{corollary}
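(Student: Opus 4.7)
The guiding identity is that, by construction of the weaker norm, $\|x\|_K^2 = (x,x) = \langle Ax,x\rangle = \|\sqrt{A}x\|_H^2$ for every $x \in H$. Thus $\sqrt{A}$ extends to an isometry from $K$ onto $H$ (using that $A > 0$ has trivial kernel, so the range of $\sqrt{A}$ is dense in $H$); equivalently, $\sqrt{A}$ realized on $H$ is an isometric embedding of the dense subspace $H \subset K$ into $H$. The intertwining $\sqrt{A}\,T = M\sqrt{A}$ of the preceding proposition then converts every spectral statement about $T$ into one about $M$.

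For the inclusion $\sigma_p(T,H) \subset \sigma_p(M,H)$, suppose $Tx = \lambda x$ with $0 \neq x \in H$. Apply $\sqrt{A}$ to both sides to get $M(\sqrt{A}x) = \lambda\,\sqrt{A}x$. Since $\ker \sqrt{A} = \ker A = 0$, the vector $\sqrt{A}x$ is nonzero in $H$, so $\lambda \in \sigma_p(M,H)$.

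For the inclusion $\sigma_{ap}(T,K) \subset \sigma_{ap}(M,H)$, let $\lambda \in \sigma_{ap}(T,K)$ and pick $x_n \in K$ with $\|x_n\|_K = 1$ and $\|(T-\lambda)x_n\|_K \to 0$; by density of $H$ in $K$ we may take $x_n \in H$. Setting $y_n := \sqrt{A}\,x_n \in H$, the isometry identity gives $\|y_n\|_H = \|x_n\|_K = 1$ and
\[
\|(M-\lambda)y_n\|_H = \|\sqrt{A}\,(T-\lambda)x_n\|_H = \|(T-\lambda)x_n\|_K \longrightarrow 0,
\]
whence $\lambda \in \sigma_{ap}(M,H)$.

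The reverse inclusion $\sigma_{ap}(M,H) \subset \sigma_{ap}(T,K)$ is where the main care is needed, since $\sqrt{A}$ need not be surjective. Given an approximate eigensequence $y_n \in H$ for $M$ at $\lambda$, I would first approximate each $y_n$ by a vector in the range of $\sqrt{A}$: because $\ker \sqrt{A} = 0$, the range of $\sqrt{A}$ is dense in $H$, so pick $x_n \in H$ with $\|\sqrt{A}x_n - y_n\|_H \le 1/n$. Then $\|\sqrt{A}x_n\|_H \to 1$ and
\[
\|(M-\lambda)\sqrt{A}x_n\|_H \le \|(M-\lambda)y_n\|_H + \|M-\lambda\|\cdot\|y_n - \sqrt{A}x_n\|_H \longrightarrow 0.
\]
Translating back via the isometry, $\|x_n\|_K = \|\sqrt{A}x_n\|_H \to 1$ and $\|(T-\lambda)x_n\|_K = \|(M-\lambda)\sqrt{A}x_n\|_H \to 0$; after normalizing $x_n/\|x_n\|_K$ we obtain an approximate eigensequence for $T$ in $K$. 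This step, exploiting density of $\mathrm{ran}(\sqrt{A})$, is the only non-formal point: everywhere else the argument is a direct transcription of the intertwining relation through the isometry $\sqrt{A}\colon K \to H$.
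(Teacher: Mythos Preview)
Your proof is correct and follows essentially the same route as the paper's: both exploit the isometry identity $\|x\|_K = \|\sqrt{A}\,x\|_H$ together with the intertwining $\sqrt{A}\,T = M\sqrt{A}$ to transfer approximate eigenvectors back and forth. The paper's argument is terser (it treats only $\lambda=0$ and states the ``if and only if'' without further comment), whereas you spell out the reverse inclusion by explicitly invoking the density of $\operatorname{ran}(\sqrt{A})$ in $H$; this extra care is exactly what is needed to justify the paper's bidirectional claim, so your version is in fact a more complete rendering of the same idea.
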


\begin{proof} The first inclusion is obvious. For proving the second one, let $x_n \in H$ be a sequence
of vectors, normalized in the weak norm: $(x_n, x_n ) = \langle A x_n, x_n \rangle = 1, \ \  n \geq 1.$
If $\lim_n (Tx_n, Tx_n) = 0$, then and only then \\
$\lim_n \| M \sqrt{A} x_n \| =0$. This proves that the
point $\lambda=0$ belongs to the approximative point spectrum of $T \in {\mathcal L}(K)$ if and only if it
belongs to the approximative point spectrum of $M \in {\mathcal L}(H)$.
\end{proof}

\begin{theorem} Let $T \in {\mathcal L}(H)$ be a symmetric operator with respect to the second inner product
$AT = T^\ast A$ and assume that the point spectrum is a discrete subset in the complement
of the essential spectrum of $T \in {\mathcal L}(H)$.Then
the point spectrum of the extension $T \in {\mathcal L}(K)$ is real, and equal to the point spectrum of $T \in {\mathcal L}(H)$,
with equal multiplicities, respectively.
\end{theorem}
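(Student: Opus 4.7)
The plan is to promote $T$ to a self-adjoint operator on $K$ via Proposition~\ref{continuity} and then exploit Fredholm theory. Applying that proposition to the intertwining $AT = T^\ast A$ (with $S = T^\ast$) produces $M \in \mathcal{L}(H)$ with $\sqrt{A}\, T = M \sqrt{A}$, and since $T$ is symmetric for the $K$-inner product, the observation immediately after the proposition forces $M = M^\ast$. The isometry $\sqrt{A} \colon (H, \| \cdot \|_K) \to H$ extends to a unitary $\tilde{U} \colon K \to H$ conjugating the extension $T \in \mathcal{L}(K)$ into $M$; hence the extension is self-adjoint on $K$, $\sigma_p(T, K) \subset \mathbb{R}$, and $\sigma(T, K) = \sigma(M) \subset \mathbb{R}$. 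The easy inclusion $\sigma_p(T, H) \subset \sigma_p(T, K)$, with the matching lower bound on multiplicities, follows because strict positivity $A > 0$ makes the embedding $H \hookrightarrow K$ injective and $T$-equivariant.

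For the reverse inclusion I would establish spectral permanence $\sigma(T, K) \subset \sigma(T, H)$: if $\lambda \notin \sigma(T, H)$, then $R = (T - \lambda)^{-1} \in \mathcal{L}(H)$ satisfies the intertwining $A R = (T^\ast - \lambda)^{-1} A$, so Proposition~\ref{continuity} extends $R$ to $\mathcal{L}(K)$, and by density this extension inverts $T - \lambda$ on $K$. Consequently any $\lambda \in \sigma_p(T, K)$ lies in $\sigma(T, H) \cap \mathbb{R}$, and the standing hypothesis — that $\sigma(T, H) \setminus \sigma_{\mathrm{ess}}(T, H)$ coincides with the discrete set $\sigma_p(T, H)$ — places $\lambda$ in $\sigma_p(T, H)$ and simultaneously guarantees that $T - \lambda$ is Fredholm of index zero on $H$.

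Matching multiplicities is the delicate step. The Fredholm condition yields $\dim \ker(T - \lambda) = \dim \ker(T^\ast - \lambda) < \infty$. The identity $A(T - \lambda) = (T^\ast - \lambda) A$ sends $\ker(T - \lambda)$ into $\ker(T^\ast - \lambda)$ injectively (since $A$ is injective), and equality of dimensions upgrades this to a bijection. Taking the adjoint of $\sqrt{A}\, T = M \sqrt{A}$ in $H$ and using $M = M^\ast$ gives $T^\ast \sqrt{A} = \sqrt{A}\, M$, so every $y \in \ker(M - \lambda)$ satisfies $\sqrt{A}\, y \in \ker(T^\ast - \lambda)$; the bijection above then produces a unique $x \in \ker(T - \lambda)$ with $\sqrt{A}\, y = A x = \sqrt{A}(\sqrt{A}\, x)$, and injectivity of $\sqrt{A}$ yields $y = \sqrt{A}\, x$. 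Thus $\sqrt{A}$ realises a linear isomorphism $\ker(T - \lambda)|_H \to \ker(M - \lambda)|_H$; composing with $\tilde{U}^{-1}$ matches the multiplicity on $K$.

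The principal obstacle is twofold. First, one must leverage the hypothesis to exclude real points of $\sigma_p(T, K)$ that might lurk in $\sigma_{\mathrm{ess}}(T, H)$, where the Fredholm alternative fails; this is what makes the "no eigenvalues in the essential spectrum" clause indispensable. Second, because $\sqrt{A}$ is not invertible, an eigenvector $y$ of $M$ need not a priori lie in the range of $\sqrt{A}$, so the naive attempt to pull $y$ back to an eigenvector of $T$ on $H$ stalls; the Fredholm-index-zero bijection $A \colon \ker(T - \lambda)|_H \to \ker(T^\ast - \lambda)|_H$ is precisely the device that circumvents this and closes the multiplicity gap.
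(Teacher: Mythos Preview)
Your proof is correct and takes a genuinely different route from the paper's. The paper argues locally and geometrically: for each $\lambda \in \sigma_p(T,H)$ it forms the $K$-orthogonal complement $V_\lambda = \{x \in H : (x,y) = 0 \text{ for all } y \in \ker(T-\lambda,H)\}$, observes that $V_\lambda$ is $T$-invariant with $(T-\lambda)|_{V_\lambda}$ bijective (Fredholm index zero plus trivial kernel), and then applies Proposition~\ref{continuity} to the $K$-symmetric inverse $(T-\lambda)^{-1}|_{V_\lambda}$ so that it extends boundedly to $K \ominus \ker(T-\lambda,H)$; this yields $\ker(T-\lambda,K) = \ker(T-\lambda,H)$ as subspaces in one stroke. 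Your approach instead globalizes through the self-adjoint model $M$, uses spectral permanence of the resolvent, and matches multiplicities via the observation that $A$ is a bijection $\ker(T-\lambda,H) \to \ker(T^\ast-\lambda,H)$, forcing every $M$-eigenvector to have the form $\sqrt{A}\,x$ with $x \in \ker(T-\lambda,H)$. The paper's argument is shorter and stays entirely within the $K$-geometry; yours makes the unitary equivalence $T|_K \cong M$ explicit and isolates precisely why eigenvectors on $K$ automatically ``regularize'' into $H$---which is the mechanism behind the $W^{1/2}$-regularity of Poincar\'e's extremal densities alluded to earlier in the survey. Both proofs invoke Fredholm index zero at the same decisive point, and both leave the exclusion of possible $K$-eigenvalues inside $\sigma_{\mathrm{ess}}(T,H)$ to the implicit reading of the hypothesis, so your flagged ``principal obstacle'' is shared rather than peculiar to your route.
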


\begin{proof} By a {\it root vector}, corresponding to the eigenvalue $\lambda$,  we mean a non-zero element of $\ker (T-\lambda)$.
Due to the symmetry of $T$ with respect to the inner product $(\cdot, \cdot)$, we have $\ker (T-\lambda) \neq 0$ only for real
values of $\lambda$. We will prove that the root subspaces of $T \in {\mathcal L}(H)$ and $T \in {\mathcal L}(K)$ coincide.

By assumption, every element $\lambda \in \sigma_p(T, H)$ does not belong to the essential spectrum of $T$, that is
$\ker (T-\lambda) $ is finite dimensional and ${\rm ran}(T-\lambda)$ is a closed subspaces of $H$, of finite codimension.
Let  $$V_\lambda = \{ x \in H, \ (x, y) = 0, y \in \ker (T-\lambda, H)\}.$$
Notice that $V_\lambda$ is a closed, finite codimensional subspace of $H$, invariant under $T$. Moreover,
$\ker (T-\lambda, V_\lambda) = 0$ and $(T-\lambda)V_\lambda = V_\lambda$ by the invariance of the Fredholm index under
finite rank perturbations. Then the operator $(T-\lambda, V_\lambda)^{-1}$ is bounded on $V_\lambda$ and
it is also symmetric, hence bounded by the above proposition in the weak norm of the space
$K \ominus \ker (T-\lambda, H)$. In conclusion, $\ker (T-\lambda, H) = \ker (T-\lambda, K)$.

\end{proof}

\begin{corollary} Assume, in the conditions of the theorem, that $\sigma_p(T, H)$ is dense in $\sigma(T, H)$.
Then $\sigma(T, H) = \sigma(T, K)$.
\end{corollary}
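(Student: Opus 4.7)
The plan is to combine the point-spectrum identification from the preceding theorem with a Krein-style extension of the resolvent supplied by Proposition~\ref{continuity}. I will first argue that both spectra sit inside $\mathbb{R}$, then establish the two inclusions separately.

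To set the stage I would record three immediate consequences of what is already in hand. From the theorem, $\sigma_p(T,H)=\sigma_p(T,K)$ and this common set is real. Since $T$ is symmetric and bounded on the Hilbert space $K$, it is automatically self-adjoint there, so $\sigma(T,K)\subset\mathbb{R}$. The density assumption combined with the closedness of $\sigma(T,H)$ forces $\sigma(T,H)=\overline{\sigma_p(T,H)}\subset\mathbb{R}$ as well. The easy inclusion $\sigma(T,H)\subset\sigma(T,K)$ now follows in one line: $\sigma_p(T,H)=\sigma_p(T,K)\subset\sigma(T,K)$, and taking closures (permissible because $\sigma(T,K)$ is closed) gives $\sigma(T,H)=\overline{\sigma_p(T,H)}\subset\sigma(T,K)$.

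The reverse inclusion $\sigma(T,K)\subset\sigma(T,H)$ I would prove contrapositively. Fix $\lambda\notin\sigma(T,H)$, so that $(T-\lambda)^{-1}\in\mathcal{L}(H)$ is well defined. The symmetry relation $AT=T^{\ast}A$ yields $A(T-\lambda)=(T-\lambda)^{\ast}A$, and inverting produces $A(T-\lambda)^{-1}=\bigl((T-\lambda)^{-1}\bigr)^{\ast}A$. Thus $(T-\lambda)^{-1}$ is symmetric with respect to the $A$-inner product, and Proposition~\ref{continuity} delivers a bounded extension $R_\lambda\in\mathcal{L}(K)$. The two defining identities $R_\lambda(T-\lambda)=I=(T-\lambda)R_\lambda$ hold on the dense subspace $H$ by construction; since $T-\lambda$ and $R_\lambda$ are both bounded on $K$, continuity propagates these identities to the whole of $K$, so $\lambda\notin\sigma(T,K)$.

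The main delicate point, in my view, is the verification that the Krein extension $R_\lambda$ is genuinely the two-sided inverse of $T-\lambda$ on $K$, and not merely a bounded symmetric extension of the $H$-resolvent. The density of $H$ inside $K$ resolves this cleanly, but it is precisely at this step that the two-norm geometry and the Krein-type inequality underlying Proposition~\ref{continuity} do the real work, so this is the place where I would argue most carefully.
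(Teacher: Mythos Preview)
Your proof is correct and follows essentially the same route as the paper's: the inclusion $\sigma(T,H)\subset\sigma(T,K)$ via the density hypothesis and the point-spectrum equality from the theorem, and the reverse inclusion $\sigma(T,K)\subset\sigma(T,H)$ by extending the resolvent $(T-\lambda)^{-1}$ to $K$ through Proposition~\ref{continuity}. The paper compresses the latter step into a single citation of Proposition~\ref{continuity}, whereas you spell out the intertwining $A(T-\lambda)^{-1}=((T-\lambda)^{-1})^{\ast}A$ and the density argument on $K$; one minor caveat is that this identity as written holds for real $\lambda$, but since you already established $\sigma(T,K)\subset\mathbb{R}$ that is all you need.
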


\begin{proof} We proved the equality of spectra  $\sigma_p(T, H) = \sigma_p(T,K)$ and we also know
from Proposition \ref{continuity} that $\sigma(T, K) \subset \sigma(T,H)$.
\end{proof}

\section{Concluding remarks} We briefly comment some recent advances related to the analysis of the Neumann-Poincar\'e operator, not included in our survey.

First of all, we mention several recent monographs devoted to the subject \cite{AK-book-07,AKL,Mazya,MM,AFKRYZ}.

One of the classical techniques of spectral analysis is to study the growth rate of the resolvent, possibly localized at a vector, in the neighborhood of en element of the point spectrum.
Even an absolute continuous part of the spectrum can be detected from the behavior of the resolvent. In certain applications this method bears the name of {\it resonance}. A great deal of effort was recently put into studying the resonance of specific boundary value problems, or interface problems, having the resonance of the NP operator as central theme \cite{AK14,ACKLM}.

The investigation of the NP operator on boundaries with minimal regularity, such as Lipschitz hypersurfaces, has occupied several generations of hard analysts, with definitive results
\cite{DKV-Duke-88}. See also \cite{Mazya,MM}.

Estimating the essential spectrum of the NP operator on domains with singularities in their boundary, such as corners, edges, cones, requires a formidable machinery of adapted pseudo-differential calculus and Fredholm theory of groupoid C*-algebras. This is a very active area of current research, under the leadership of Victor Nistor \cite{CNQ,MN}. On this ground, refined numerical experiments complement and enhance the theoretical findings \cite{HP1,HP2}.

An array of inverse problems rely on data carried on or transformed by the NP operator \cite{Kang,APST}. The celebrated Calderon problem, of detecting the conductivity of a medium
from field measurements on the boundary is closely connected to the theory of the NP operator. Compare also with Steklov's problem \cite{GP}.

A rather unexpected interpretation of the double layer potential in two real variables provided better estimates for the operator norm $\| p(T) \|$
of a polynomial applied to a complex matrix $T$, in terms of the uniform norm of $p$ on the numerical range $W(T)$ of $T$. In operator theory and numerical matrix analysis this
is an important variation on the von Neumann inequality theme.
A conjecture attributed to Crouzeix asserts that
the best constant $C$ in such an estimate
$$ \| p(T) \| \leq C \| p \|_{\infty, W(T)}, \ \ p \in {\mathbb C}[z],$$
should be $C=2$. So far it is known that $C = 1 + \sqrt{2},$ \cite{CP}.  For the role of the NP operator in this quest see also \cite{PS}.

The computational and numerical analysis aspects of layer potentials have evolved in parallel to the theoretical advances of the last decades. As customary nowadays, some spectral analysis phenomena referring to the NP operator were first discovered via numerical experiments, see in particular \cite{HP1,HP2}. From the abundant bibliography covering singular integral equations closely related to layer potentials we mention only a few titles: \cite{SW,ABN,BK,CKK,HO}.

\end{document}